\date{}
\title{Infinitely many minimal classes of graphs \\ of unbounded clique-width\thanks{The authors acknowledge support of EPSRC, grant EP/L020408/1}}
\author{A. Collins,
  J. Foniok\thanks{School of Computing, Mathematics and Digital Technology, Manchester
  Metropolitan University, John Dalton Building, Chester Street, Manchester, M1 5GD, United Kingdom},
N. Korpelainen, V. Lozin, V. Zamaraev}
\tikzstyle{vertex}=[circle,fill=black!100,text=white,inner sep=0.8mm]
\tikzstyle{point}=[circle,fill=black,inner sep=0.1mm]
\begin{document}
\maketitle

\newtheorem{theorem}{Theorem}
\newtheorem{lemma}{Lemma}
\newtheorem{proposition}{Proposition}
\newtheorem{prop}{Proposition}
\newtheorem{cor}{Corollary}
\newtheorem{definition}{Definition}
\newtheorem{remark}{Remark}
\newtheorem{conjecture}{Conjecture}

\def\N{\mathbb{N}}
\def\t{\sim}
\def\nt{\nsim}
\def\1{n+1}
\def\2{n+2}

\begin{abstract}
The celebrated theorem of Robertson and Seymour states that in the family of minor-closed 
graph classes, there is a unique minimal class of graphs of unbounded tree-width, namely, 
the class of planar graphs. In the case of tree-width, the restriction to minor-closed
classes is justified by the fact that the tree-width of a graph is never smaller than 
the tree-width of any of its minors. This, however, is not the case with respect to clique-width,
as the clique-width of a graph can be (much) smaller than the clique-width of its minor.
On the other hand, the clique-width of a graph is never smaller than the clique-width of 
any of its induced subgraphs, which allows us to be restricted to hereditary classes
(that is, classes closed under taking induced subgraphs), when we study clique-width. 
Up to date, only finitely many minimal hereditary classes of graphs of unbounded clique-width
have been discovered in the literature. In the present paper, we prove that the family of such 
classes is infinite. Moreover, we show that the same is true with respect to linear clique-width.    

\end{abstract}

{\em Keywords:} clique-width, linear clique-width, hereditary class



\section{Introduction}

Clique-width is a graph parameter which is important in theoretical computer science, because many 
algorithmic problems that are generally NP-hard become polynomial-time solvable when restricted 
to graphs of bounded clique-width \cite{CMR00}. Clique-width is a relatively new notion and it generalises another 
important graph parameter, tree-width, studied in the literature for decades. Clique-width is 
stronger than tree-width in the sense that graphs of bounded tree-width have bounded clique-width,
but not necessarily vice versa. For instance, both parameters are bounded for trees, while
for complete graphs only clique-width is bounded. 

When we study classes of graphs of bounded tree-width, we may assume without loss of generality that  
together with every graph $G$ our class contains all minors of $G$, as the tree-width of a minor can 
never be larger than the tree-width of the graph itself. In other words, when we try to identify classes 
of graphs of bounded tree-width, we may restrict ourselves to minor-closed graph classes. However, when 
we deal with clique-width this restriction is not justified, as the clique-width of a minor of $G$ can be
much larger than the clique-width of $G$. On the other hand, the clique-width of $G$ is never smaller 
than the clique-width of any of its induced subgraphs \cite{CO00}. This allows us to be restricted to hereditary classes,
that is, those that are closed under taking induced subgraphs. 

One of the most remarkable outcomes of the graph minor project of Robertson and Seymour is the proof of Wagner's conjecture
stating that the minor relation is a well-quasi-order \cite{RS88}. This implies, in particular, that in the world of 
minor-closed graph classes there exist minimal classes of unbounded tree-width and the number of such classes is finite. 
In fact, there is just one such class (the planar graphs), which was shown even before the proof of Wagner's conjecture \cite{RS86}. 

In the world of hereditary classes the situation is more complicated, because the induced subgraph relation is not a well-quasi-order. 
It contains infinite antichains, and hence, there may exist infinite strictly decreasing sequences of graph classes with no minimal one. 
In other words, even the existence of minimal hereditary classes of unbounded clique-width is not an obvious fact. This fact was 
recently confirmed in \cite{Lozin}. However, whether the number of such classes is finite or infinite
remained an open question. In the present paper, we settle this question by showing that the family of minimal hereditary classes 
of unbounded clique-width is infinite. Moreover, we prove that the same is true with respect to {\it linear} clique-width. 

The organisation of the paper is as follows. In the next section, we introduce basic notation and terminology. 
In Section~\ref{sec:minimal}, we describe a family of graph classes of unbounded clique-width and prove that 
infinitely many of them are minimal with respect to this property. In Section~\ref{sec:more}, we identify more
classes of unbounded clique-width. Finally, Section~\ref{sec:conclusion} concludes the paper with a number of open problems. 

\section{Preliminaries}
\label{sec:pre}
All graphs in this paper are undirected, without loops and multiple edges. 
For a graph $G$, we denote by $V(G)$ and $E(G)$ the vertex set and the edge set of $G$, respectively. 
The \emph{neighbourhood} of a vertex $v\in V(G)$ is the set of vertices adjacent to $v$ and the \emph{degree} of $v$
is the size of its neighbourhood. As usual, by $P_n$ and $C_{n}$ we denote a chordless path and a chordless cycle with $n$ vertices, respectively.

In a graph, an {\it independent set} is a subset of vertices no two of which are adjacent. A graph is
\emph{bipartite} if its vertices can be partitioned 
into two independent sets. Given a bipartite graph $G$ together with a bipartition of its vertices into two independent sets $V_1$ and $V_2$,
the \emph{bipartite complement} of $G$ is the bipartite graph obtained from $G$ by complementing the edges {\it between} $V_1$ and $V_2$.

Let $G$ be a graph and $U\subseteq V(G)$ a subset of its vertices. Two vertices of $U$ will be called
\emph{$U$-similar} if they have the same neighbourhood outside $U$.  
Clearly, $U$-similarity is an equivalence relation.  The number of equivalence classes of $U$-similarity will be denoted $\mu(U)$. 
Also, by $G[U]$ we will denote the subgraph of $G$ 
induced by $U$, that is, the subgraph of $G$ with vertex set $U$ and two vertices being
adjacent in $G[U]$ if and only if they are adjacent in $G$. 
We say that a graph $H$ is an \emph{induced subgraph} of $G$ if $H$ is isomorphic to $G[U]$ for some $U\subseteq V(G)$. 

A class $X$ of graphs is {\it hereditary} if it is closed under taking induced subgraphs, that is, $G\in X$ implies $H\in X$
for every induced subgraph $H$ of $G$.
It is well-known that a class of graphs is hereditary if and only if it can be characterised in terms of 
forbidden induced subgraphs. More formally, given a set of graphs $M$, we say that 
a graph $G$ is \emph{$M$-free} if $G$ does not contain induced subgraphs isomorphic to graphs in $M$.
Then a class $X$ is hereditary if and only if graphs in $X$ are $M$-free for a set $M$.

\medskip
The notion of clique-width of a graph was introduced in \cite{CER93}. The clique-width of a graph $G$ is denoted ${\rm cwd}(G)$
and is defined as the minimum number of labels needed to construct $G$ by means of the following four graph operations: 
\begin{itemize}
\item
creation of a new vertex $v$ with 
label $i$ (denoted $i(v)$), 
\item disjoint union of two labelled graphs $G$ and 
$H$ (denoted $G\oplus H$), 
\item connecting vertices with specified labels $i$ 
and $j$ (denoted $\eta_{i,j}$) and 
\item renaming label $i$ to label $j$ 
(denoted $\rho_{i\to j}$). 
\end{itemize}

Every graph can be defined by an algebraic expression using the four operations above. 
This expression is called a $k$-expression if it uses $k$ different labels.
For instance, the cycle $C_5$ on vertices $a,b,c,d,e$ 
(listed along the cycle) can be defined by the following 4-expression:
$$
\eta_{4,1}(\eta_{4,3}(4(e)\oplus\rho_{4\to 3}(\rho_{3\to 2}(\eta_{4,3}(4(d)\oplus\eta_{3,2}(3(c)\oplus\eta_{2,1}(2(b)\oplus 1(a)))))))).
$$

Alternatively, any algebraic expression defining $G$ can be represented as a rooted tree, 
whose leaves correspond to the operations of vertex creation, the internal nodes correspond 
to the $\oplus$-operations, and the root is associated with $G$. The operations $\eta$ and 
$\rho$ are assigned to the respective edges of the tree. Figure~\ref{fig:tree} shows the tree 
representing the above expression defining a $C_5$. 

\bigskip
\begin{figure}[ht]
\begin{center}
\setlength{\unitlength}{0.32mm}
\begin{picture}(370,50)
\put(15,50){\circle{20}}
\put(90,50){\circle{20}}
\put(165,50){\circle{20}}
\put(240,50){\circle{20}}
\put(315,50){\circle{20}}
\put(355,50){\circle{20}}
\put(90,10){\circle{20}}
\put(165,10){\circle{20}}
\put(240,10){\circle{20}}
\put(315,10){\circle{20}}
\put(25,50){\line(1,0){55}}
\put(100,50){\line(1,0){55}}
\put(175,50){\line(1,0){55}}
\put(250,50){\line(1,0){55}}
\put(325,50){\line(1,0){20}}
\put(90,40){\line(0,-1){20}}
\put(165,40){\line(0,-1){20}}
\put(240,40){\line(0,-1){20}}
\put(315,40){\line(0,-1){20}}
\put(85,47){+}
\put(160,47){+}
\put(235,47){+}
\put(311,47){+}
\put(8,47){$C_5$}
\put(83,10){$_{4(e)}$}
\put(158,10){$_{4(d)}$}
\put(233,10){$_{3(c)}$}
\put(308,10){$_{2(b)}$}
\put(348,50){$_{1(a)}$}
\put(101,55){$_{\rho_{4\to 3}\rho_{3\to 2}\eta_{4,3}}$}
\put(36,55){$_{\eta_{4,1}\eta_{4,3}}$}
\put(196,55){$_{\eta_{3,2}}$}
\put(271,55){$_{\eta_{2,1}}$}
\end{picture}
\end{center}
\caption{The tree representing the expression defining a $C_5$}
\label{fig:tree}
\end{figure}
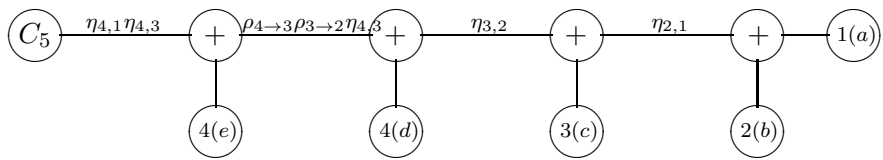

Let us observe that the tree in Figure~\ref{fig:tree} has a special form known as a {\it caterpillar
tree} (that is, a tree that becomes a path 
after the removal of vertices of degree 1). The minimum number of labels needed to construct a graph $G$ by means of caterpillar trees is 
called the {\it linear clique-width} of $G$ and is denoted ${\rm lcwd}(G)$. Clearly, ${\rm lcwd}(G) \ge {\rm cwd}(G)$ and there are classes 
of graphs for which the difference between clique-width and linear clique-width can be arbitrarily large (see e.g. \cite{Robert}).

\medskip 
A notion which is closely related to clique-width is that of {\it rank-width} (denoted $\text{rwd}(G)$), which was introduced by Oum and Seymour in \cite {oum}. 
They showed that rank-width and clique-width are related to each other by proving that if the clique-width of a graph $G$ is $k$, 
then $$\text{rwd}(G)\leq k\leq 2^{\text{rwd}(G)+1}-1.$$ Therefore a class of graphs has unbounded clique-width if and only if it also has unbounded rank-width. 

For a graph $G$ and a vertex $v$, the ${\it local}$ ${\it complementation}$ at $v$ is the operation that 
replaces the subgraph induced by the neighbourhood of $v$ with its complement. 
A graph $H$ is a ${\it vertex}$-${\it minor}$ of $G$ if $H$ can be obtained from $G$ by a sequence 
of local complementations and vertex deletions. In \cite {oum2} it was proved that if $H$ is a vertex-minor of $G$, 
then the rank-width of $H$ is at most the rank-width of $G$. 

\medskip
Finally, we introduce some language-theoretic terminology and notation. 
Given a word $\alpha$, we denote by $\alpha(k)$ the $k$-th letter of $\alpha$ and by $\alpha^k$ the concatenation of $k$ copies of $\alpha$.
A {\it factor} of $\alpha$ is a contiguous subword of $\alpha$, that is, a subword $\alpha(i)\alpha(i+1)\ldots\alpha(i+k)$ for some $i$ and $k$.
An infinite word $\alpha$ is {\it periodic} if there is a positive integer~$k$ such that $\alpha(i)=\alpha(i+k)$ for all $i$.


\section{Minimal classes of graphs of unbounded clique-width}
\label{sec:minimal}

In this section, we describe an infinite family of graph classes of unbounded clique-width (Subsections~\ref{sec:basic} and~\ref{sec:other}). 
The fact that each of them is a minimal hereditary class of unbounded clique-width will be proved in Subsection~\ref{sec:min}. 

Each class in our family is defined through a universal element, that is, an infinite graph that contains all graphs from the class as induced subgraphs.
All constructions start from the graph~$\mathcal{P}$ given by
\begin{align*}
V(\mathcal{P}) &= \{ v_{i,j} : i,j \in \mathbb{N} \}, \\
E(\mathcal{P}) &= \bigl\{ \{ v_{i,j}, v_{i,j+1} \} : i,j \in \mathbb{N} \bigr\}.
\end{align*}
The \emph{$j$th column} of~$\mathcal{P}$ is the set $V_j = \{ v_{i,j} : i \in \mathbb{N} \}$,
and the \emph{$i$th row} of~$\mathcal{P}$ is the set $R_i = \{ v_{i,j} : j \in \mathbb{N} \}$.
Observe that each row of~$\mathcal{P}$ induces an infinite chordless path,
and the graph~$\mathcal{P}$ is the disjoint union of these paths.
Moreover, any two consecutive columns $V_j$ and~$V_{j+1}$ induce a 1-regular graph, that is,
a collection of disjoint edges (one edge from each path). 

Let $\alpha=\alpha_1\alpha_2\ldots$ be an infinite binary word, that is, an infinite word such that $\alpha_j\in \{0,1\}$ for each natural $j$. 
The graph ${\cal P}^{\alpha}$ is obtained from $\cal P$ by complementing the edges between two consecutive columns $V_j$ and $V_{j+1}$ if and only if $\alpha_j=1$.
In other words, we apply bipartite complementation to the bipartite graph induced by $V_j$ and $V_{j+1}$.
In particular, if $\alpha$ does not contain $1$s, then ${\cal P}^{\alpha}=\cal P$. 

Finally, by $\cal G_{\alpha}$ we denote the class of all finite induced subgraphs of ${\cal P}^{\alpha}$.
By definition, $\cal G_{\alpha}$ is a hereditary class. In what follows we show that 
$\cal G_{\alpha}$ is a minimal hereditary class of unbounded clique-width for infinitely many values of $\alpha$.

\subsection{The basic class}
\label{sec:basic}

Our first example constitutes the basis for infinitely many other constructions. It deals with the class ${\cal G}_{1^{\infty}}$, where $1^{\infty}$
stands for the infinite word of all $1$s. Let us denote by 
\begin{itemize}
\item[$F_{n,n}$] the subgraph of ${\cal P}^{1^{\infty}}$ induced by $n$ consecutive columns and any $n$ rows.
\end{itemize}
In order to show that ${\cal G}_{1^{\infty}}$ is a class of unbounded clique-width, 
we will prove the following lemma.

\begin{lemma}\label{cm}
The clique-width of $F_{n,n}$ is at least $\lfloor n/2\rfloor$.
\end{lemma}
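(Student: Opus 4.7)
The plan is to apply the standard ``$\mu$-function'' technique for lower-bounding clique-width. It is well-known that if ${\rm cwd}(G) \le k$, then any $k$-expression for $G$ has a disjoint-union node at which the set $U$ of vertices in the corresponding subtree satisfies $|V(G)|/3 \le |U| \le 2|V(G)|/3$ and $\mu(U) \le k$. The bound on $\mu(U)$ holds because at this node the vertices of $U$ carry at most $k$ distinct labels, and any two vertices sharing a label will necessarily have identical neighborhoods outside $U$ in the final graph (subsequent $\eta$- and $\rho$-operations treat equally-labelled vertices uniformly). Applied to $G = F_{n,n}$, it therefore suffices to show that every $U \subseteq V(F_{n,n})$ with $n^2/3 \le |U| \le 2n^2/3$ satisfies $\mu(U) \ge \lfloor n/2 \rfloor$.

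I would next analyze the $U$-similarity classes inside a single column. Since the only edges of $F_{n,n}$ lie between consecutive columns, where they form $K_{n,n}$ minus the perfect row-matching, the $\bar U$-neighborhood of a vertex $v_{i,j} \in U$ equals $((V_{j-1} \cap \bar U) \setminus \{v_{i,j-1}\}) \cup ((V_{j+1} \cap \bar U) \setminus \{v_{i,j+1}\})$. A short computation shows that for $i \neq i'$ the vertices $v_{i,j}, v_{i',j} \in U$ are $U$-similar if and only if both $v_{i,j-1}, v_{i',j-1} \in U$ and both $v_{i,j+1}, v_{i',j+1} \in U$. Consequently the $U$-similarity classes within $U \cap V_j$ consist of a single \emph{good} class (collecting those $v_{i,j} \in U$ whose two partners $v_{i,j\pm 1}$ both lie in $U$) together with one singleton class for each \emph{bad} vertex $v_{i,j} \in U$ having at least one partner in $\bar U$; the boundary columns $j \in \{1,n\}$ are handled by the obvious one-sided version of the same statement.

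I would then argue that similarity classes coming from different columns almost never merge. Since the $\bar U$-neighborhood of any $v_{i,j} \in U$ is contained in $V_{j-1} \cup V_{j+1}$, two vertices of $U$ from columns $j, j'$ with $|j-j'| \ge 3$ can be $U$-similar only if both their $\bar U$-neighborhoods are empty, and such vertices all merge into a single ``empty'' class. The cases $|j-j'| \in \{1,2\}$ permit non-trivial collapses only under very restrictive configurations (requiring certain columns of $\bar U$ to contain at most one vertex), and these degenerate configurations can be tracked so as to lose at most a constant factor in the class count.

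Finally I would combine the per-column contributions under the balance hypothesis. The two extreme regimes are immediate: if $U$ is a union of full columns, then the column straddling the $U$--$\bar U$ boundary alone produces $n$ singleton classes with pairwise distinct $\bar U$-neighborhoods; if $U$ is a union of full rows, every interior column produces a non-empty good class whose $\bar U$-neighborhood touches both $V_{j-1}$ and $V_{j+1}$, yielding $n$ pairwise distinct classes. For intermediate $U$, I would apply a counting argument to the binary matrix $(x_{i,j})$ with $x_{i,j}=1$ iff $v_{i,j} \in \bar U$: the total number of bad vertices across all columns is at least half the number of $01/10$-transitions summed over all rows, so balance forces either many transitions (hence many bad singletons) or many wholly monochromatic rows (reducing to the case already handled). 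The main obstacle will be stitching these regimes together cleanly and verifying that cross-column collapses never bring the final count below $\lfloor n/2 \rfloor$; this will likely require a delicate case analysis based on how many rows (resp.\ columns) of $U$ are monochromatic versus ``mixed''.
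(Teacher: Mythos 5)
Your overall framework (find a balanced $\oplus$-node, bound the number of labels there from below by $\mu(U)$) is sound and standard, but it is not the route the paper takes, and the place where you diverge is exactly where your argument stops being a proof. The paper does not use a balanced node at all: it takes the \emph{lowest} $\oplus$-node whose subtree contains a full column, which engineers a very specific situation (some column is entirely non-yellow, yet no column is entirely red or blue) and then explicitly constructs $\lfloor n/2\rfloor$ red vertices that are pairwise distinguished by non-red vertices, via a left-to-right scan of the columns. That choice of node is what makes the counting clean. By insisting on a balanced cut instead, you have committed yourself to proving that \emph{every} $U$ with $n^2/3\le|U|\le 2n^2/3$ satisfies $\mu(U)\ge\lfloor n/2\rfloor$, which is a substantially harder combinatorial statement, and your write-up does not prove it: the third and fourth paragraphs repeatedly defer the work (``can be tracked so as to lose at most a constant factor'', ``will likely require a delicate case analysis''). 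That deferred case analysis \emph{is} the lemma; without it you have only a plan.

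Concretely, two steps would fail or need real work as written. First, the cross-column collapses are not a negligible nuisance: a ``bad'' vertex $v_{i,j}$ can still have empty $\bar U$-neighbourhood (when $\bar U\cap V_{j+1}=\{v_{i,j+1}\}$ and $V_{j-1}\subseteq U$), and the ``good'' classes of two columns merge whenever the relevant neighbouring columns lie entirely in $U$; ruling out that too many classes merge requires exactly the balance-versus-structure trade-off you have not carried out, and a loss of ``a constant factor'' is not automatically compatible with the specific target $\lfloor n/2\rfloor$ (a correct execution of this strategy may well yield only $n/3$ or $n/6$, which would prove unbounded clique-width but not the stated lemma). Second, the dichotomy ``many transitions or many monochromatic rows'' does not by itself produce $\lfloor n/2\rfloor$ pairwise non-similar vertices, because the bad vertices produced by transitions and the column-classes produced by monochromatic rows can themselves coincide across columns; you would need to run both counts simultaneously against the same collapse analysis. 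I would recommend either completing this analysis in full, or switching to the paper's device of choosing the lowest $\oplus$-node containing a full column, which sidesteps arbitrary balanced cuts entirely.
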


\begin{proof}
Let ${\rm cwd}(F_{n,n})=t$. Denote by $\tau$ a $t$-expression defining $F_{n,n}$ and
by ${\it tree}(\tau)$ the rooted tree representing $\tau$.
The subtree of ${\it tree}(\tau)$ rooted at a node $x$ will be denoted
${\it tree}(x,\tau)$. This subtree corresponds to a subgraph of $F_{n,n}$,
which will be denoted $F(x)$. The label of a vertex $v$ of the graph $F_{n,n}$
at the node $x$ is defined as the label that $v$ has immediately prior to applying
the operation $x$.

Let $a$ be a lowest $\oplus$-node in ${\it tree}(\tau)$ such that $F(a)$ contains a full column of $F_{n,n}$.
Denote the children of $a$ in ${\it tree}(\tau)$ by $b$ and $c$. Let us  colour all vertices in
$F(b)$ blue and all vertices in $F(c)$ red, and the remaining vertices of $F_{n,n}$ yellow. 
Note that by the choice of $a$ the graph $F_{n,n}$ contains a non-yellow column (that is, a column each vertex 
of which is non-yellow), but none of its columns are entirely red or blue. 
Let $V_r$ be a non-yellow column of $F_{n,n}$. Without loss of generality we assume
that $r\le \lceil n/2 \rceil$ and that the column $r$ contains at least $n/2$ red vertices,
since otherwise we could consider the columns in reverse order and swap the colours red and blue.

Observe that edges of $F_{n,n}$ between different coloured vertices are not present in $F(a)$. 
Therefore, if a non-red vertex distinguishes two red vertices $u$ and $v$, then $u$ and $v$
must have different labels at the node $a$. We will use this fact to show that $F(a)$ contains 
a set $U$ of at least $\lfloor n/2\rfloor$ vertices with pairwise different labels at the node $a$. 
Such a set can be constructed by the following procedure.

\begin{enumerate} 
\item Set $j = r,\ U=\emptyset$ and $I=\{i : v_{i,r} \mbox{ is
    red}\}$.
\item Set $K= \{i\in I : v_{i,j+1} \mbox{ is non-red}\}$.
\item If $K\not= \emptyset$, add the vertices $\{v_{k,j}: k\in K\}$ to
  $U$. Remove members of $K$ from $I$.
\item If $I=\emptyset$, terminate the procedure.
\item Increase $j$ by 1. If $j=n$, choose an arbitrary $i\in I$, put
  $U=\{v_{i,m}: r\leq m \leq n-1\}$ and terminate the procedure.
\item Go back to Step 2.
\end{enumerate}

It is not difficult to see that this procedure must terminate. To
complete the proof, it suffices to show that whenever the procedure
terminates, the size of $U$ is at least $\lfloor n/2\rfloor$ and 
the vertices in $U$ have pairwise different labels at the node $a$

First, suppose that the procedure terminates in Step 5. 
Then $U$ is a subset of red vertices from at least 
$\lfloor n/2\rfloor$ consecutive columns of row $i$. Consider two vertices 
$v_{i,l}\ , v_{i,m}\in U$ with $l<m$. According to the above procedure, 
$v_{i,m+1,}$ is red. Since $F_{n,n}$ does not contain an entirely red column,
there must exist a non-red vertex $w$ in the column $m+1$. According to the structure
of $F_{n,n}$, vertex $w$ is adjacent to $v_{i,m}$ and non-adjacent to  $v_{i,l}$. 
We conclude that $v_{i,l}$ and $v_{i,m}$ have different labels. Since $v_{i,l}$ and $v_{i,m}$
have been chosen arbitrarily, the vertices of $U$ have pairwise different labels.

Now suppose that the procedure terminates in Step 4. 
By analysing Steps 2 and 3, it is easy to deduce that $U$ is
a subset of red vertices of size at least $\lfloor n/2\rfloor$.
Suppose that $v_{i,l}$ and $v_{k, m}$ are two vertices in $U$ with $l\leq m$. 
The procedure certainly guarantees that $i\not= k$ and that both $v_{i,l+1}$ and 
$v_{k,m+1}$ are non-red. If $m\in \{l, l+2\}$, then it is clear that $v_{i,l+1}$ 
distinguishes vertices $v_{i,l}$ and $v_{k,m}$, and therefore these vertices have different labels.
If $m\notin \{l,l+2\}$, we may consider vertex $v_{k,m-1}$ which must be red. 
Since $F_{n,n}$ does not contain an entirely red column, the vertex $v_{k,m}$ must have a
non-red neighbour $w$ in the column $m-1$. But $w$ is not a neighbour of
$v_{i,l}$, trivially. We conclude that $v_{i,l}$ and $v_{k,m}$ have
different labels, and therefore, the vertices of $U$ have pairwise different labels.  
This shows that the clique-width of the graph $F_{n,n}$ is at least $\lfloor n/2\rfloor$.  
\end{proof}

\subsection{Other classes}
\label{sec:other}

In this section, we discover more hereditary classes of graphs of unbounded clique-width 
by showing that for all $n\in \mathbb{N}$ such classes have graphs containing $F_{n,n}$ 
as a vertex-minor.

\begin{lemma}\label{ucw}
Let $\alpha$ be an infinite binary word containing infinitely many $1$s.
Then the clique-width of graphs in the class ${\cal G}_{\alpha}$ is unbounded.
\end{lemma}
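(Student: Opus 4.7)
The plan combines Lemma~\ref{cm} with the rank-width/clique-width correspondence recalled in the preliminaries. Lemma~\ref{cm} gives $\mathrm{cwd}(F_{n,n})\ge\lfloor n/2\rfloor$, and the Oum--Seymour bound $\mathrm{cwd}(G)\le 2^{\mathrm{rwd}(G)+1}-1$ forces $\mathrm{rwd}(F_{n,n})\to\infty$. Since rank-width is monotone under the vertex-minor relation~\cite{oum2} and never exceeds clique-width, it is enough to exhibit, for every $n$, a graph in $\mathcal{G}_\alpha$ admitting $F_{n,n}$ as a vertex-minor.

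Fix $n$. Using the assumption that $\alpha$ contains infinitely many $1$s, I select positions $j_1<j_2<\dots<j_n$ with $\alpha_{j_k}=1$, and let $H_n$ be the induced subgraph of $\mathcal{P}^\alpha$ on the first $n$ rows $R_1,\dots,R_n$ and on the columns $V_1,V_2,\dots,V_{j_n+1}$. Then $H_n\in\mathcal{G}_\alpha$, and the plan is to realise $F_{n,n}$ as a vertex-minor of $H_n$ by retaining the columns $V_{j_1},V_{j_1+1},\dots,V_{j_n},V_{j_n+1}$ (between each consecutive pair $V_{j_k},V_{j_k+1}$ of which the bipartite complement of a matching already appears, because $\alpha_{j_k}=1$) while contracting the intermediate columns away and, in so doing, producing the missing bipartite-complement-of-matching adjacencies between $V_{j_k+1}$ and $V_{j_{k+1}}$.

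The main obstacle is this column-contraction step. Local complementation at a vertex of an intermediate column followed by deleting that vertex toggles all adjacencies in its neighbourhood, which for a matching column amounts to composing the bipartite relations on its two sides; iterating this across a $0$-block should therefore bridge $V_{j_k+1}$ to $V_{j_{k+1}}$ in the required way. The delicate bookkeeping lies in controlling spurious intra-column edges that appear when local complementations are performed at several vertices of the same intermediate column, and whose presence depends on the parity of~$n$. I expect to handle this by prescribing a specific order of local complementations and, if necessary, supplementing them with further local complementations inside $V_{j_k+1}$ or $V_{j_{k+1}}$ to cancel unwanted side effects. Once the contraction of a single $0$-block is verified, iterating it over $k=1,\dots,n-1$ turns $H_n$ into $F_{n,n}$, and the first paragraph finishes the proof.
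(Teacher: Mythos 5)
Your overall strategy is exactly the paper's: reduce to Lemma~\ref{cm} by exhibiting $F_{n,n}$ as a vertex-minor of a suitable induced subgraph of $\mathcal{P}^\alpha$, then pass through rank-width, which is monotone under vertex-minors. However, the core of the argument --- the verification that the column contractions actually produce $F_{n,n}$ --- is precisely the part you leave as an intention (``I expect to handle this by prescribing a specific order \dots and, if necessary, supplementing \dots''), and as written there are two concrete problems. First, contracting a block of $0$s composes a chain of perfect matchings, which yields a \emph{matching} between $V_{j_k+1}$ and $V_{j_{k+1}}$, not the bipartite complement of a matching that you claim to produce; you still need a further step absorbing that residual matching column into an adjacent complemented column. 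Second, the intra-column side effects are not something to be cancelled by auxiliary complementations chosen ad hoc: they must be shown to vanish, and the paper does this by fixing $n$ \emph{even} at the outset. In the step eliminating a factor $01$, a pair $x,y$ in the column $V_{j+2}$ has its adjacency toggled once for each $z\in V_{j+1}$ adjacent to both, i.e.\ exactly $n-2$ times, so $V_{j+2}$ stays independent exactly because $n$ is even. Your proposal notes that parity matters but never commits to a parity, so the argument is not closed.

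Two further points would simplify your write-up to match what actually works. The order of the local complementations at the vertices of an intermediate column is immaterial: that column is independent, so complementing at one of its vertices never changes the neighbourhood of another, and the total effect on any pair is just a toggle count; no ``specific order'' is needed. And the bookkeeping becomes tractable only if you reduce one letter at a time, as the paper does: first rewrite $00\to 0$ (where the contracted column has pairwise disjoint two-vertex neighbourhoods, so nothing interferes), and only then rewrite $01\to 1$ (where the parity argument above applies). With those ingredients supplied --- $n$ even, the two-stage rewriting, and the toggle-count computation --- your outline becomes the paper's proof.
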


\begin{proof}
First fix an even number $n$. Let $\beta$ be a factor of $\alpha$ containing precisely $n$ occurrences of $1$, 
starting and ending with $1$. We denote the length of $\beta$ by $\ell$ and consider the subgraph $G_n$ of $P^{\alpha}$ induced 
by $\ell+1$ consecutive columns corresponding to $\beta$ and by any $n$ rows.  We will now show that $G_n$ contains the graph $F_{n,n}$ 
defined in Lemma~\ref{cm} as a vertex-minor.

If $\beta$ contains $00$ as a factor, then there are three columns $V_i, V_{i+1}, V_{i+2}$ such that each of $V_i\cup V_{i+1}$ and $V_{i+1}\cup V_{i+2}$ 
induces a 1-regular graph. We apply a local complementation to each vertex of $G_n$ in column $V_{i+1}$ and then delete the vertices of $V_{i+1}$ from
$G_n$. Under this operation, our graph transforms into a new graph where column $V_{i+1}$ is absent, while columns $V_i$ and $V_{i+2}$ induce a 1-regular graph. 
In terms of words, this operation is equivalent to removing one $0$ from the factor $00$. Applying this transformation repeatedly, we can reduce
$G_n$ to an instance corresponding to a word $\beta$ with no two consecutive $0$s.

Now assume $\beta$ contains $01$ as a factor, and let $V_j, V_{j+1}$ and $V_{j+2}$ be three consecutive columns such that 
$V_{j}\cup V_{j+1}$ induces a 1-regular graph, while the edges between $V_{j+1}$ and $V_{j+2}$ form the bipartite complement of a 1-regular graph.
We apply a local complementation to each vertex of $V_{j+1}$ in turn and then delete the vertices of $V_{j+1}$ from $G_n$. 
It is not difficult to see that in the transformed graph the edges between $V_{j}$ and $V_{j+2}$ form the bipartite complement of a matching. 
Looking at the vertices in $V_{j+2}$ we see that for any two vertices $x$ and $y$ in this column, 
when a local complementation is applied at $z\in V_{j+1}$ the adjacency between $x$ and $y$ is complemented if and only if both $x$ and $y$ are adjacent to $z$. 
Since $|V_{j+2}|=n$ is even, we conclude that after $n$ applications of local complementation $V_{j+2}$ remains an independent set.
In terms of words, this operation is equivalent to removing $0$ from the factor $01$. Applying this transformation repeatedly, we can reduce
$G_n$ to an instance corresponding to a word $\beta$ which is free of $0$s.

The above discussion shows that $G_n$ can be transformed by a sequence of local complementations and vertex deletions into $F_{n,n}$.
Therefore, $G_n$ contains the graph $F_{n,n}$ as a vertex-minor. Since $n$ can be arbitrarily large, we conclude that the rank-width,
and hence the clique-width, of graphs in ${\cal G}_{\alpha}$ is unbounded.
\end{proof}


\subsection{Minimality of classes ${\cal G}_{\alpha}$ with a periodic $\alpha$}
\label{sec:min}

In the previous section, we proved that any class ${\cal G}_{\alpha}$ with infinitely many $1$s in $\alpha$ has unbounded clique-width. 
In the present section, we will show that if $\alpha$ is periodic, then ${\cal G}_{\alpha}$ is a minimal hereditary class of graphs of unbounded 
clique-width, provided that $\alpha$ contains at least one $1$. In other words, we will show that in any proper hereditary subclass of ${\cal G}_{\alpha}$
the clique-width is bounded. Moreover, we will show that proper hereditary subclasses of ${\cal G}_{\alpha}$ have bounded {\it linear} clique-width.
To this end, we first prove a technical lemma, which strengthens a similar result given in \cite{Lozin} from clique-width to linear clique-width.
Let us repeat that by $\mu (U)$ we denote the number of similarity classes with respect to an equivalence relation defined in Section~\ref{sec:pre}.

\begin{lemma}\label{lem:min-3}
Let $m\geq 2$ and $\ell$ be positive integers.  Suppose that the vertex set
of $G$ can be partitioned into sets $U_1,U_2,\ldots$ where for each $i$, 
\begin{itemize}
\item[{\rm (1)}] ${\rm lcwd}(G[U_i])\leq m$, 
\item[{\rm (2)}] $\mu(U_i)\leq \ell$ and $\mu(U_1\cup\cdots\cup U_i)\leq\ell$. 
\end{itemize}
Then ${\rm lcwd}(G)\leq \ell (m+1)$.
\end{lemma}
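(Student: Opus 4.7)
The plan is to construct a linear $\ell(m+1)$-expression for $G$ incrementally, by induction on the index $i$ in the partition, maintaining the invariant that after processing $U_1, \ldots, U_i$ the current expression uses only $\ell$ of the $\ell(m+1)$ available labels, with vertices sharing a label exactly when they lie in the same similarity class of $U_1 \cup \cdots \cup U_i$ (this is made possible by $\mu(U_1 \cup \cdots \cup U_i) \leq \ell$). The base case $i = 1$ is handled by taking a linear $m$-expression for $G[U_1]$ and then collapsing its final labels into one label per similarity class of $U_1$.

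For the inductive step, I would reserve $\ell$ labels $P_1, \ldots, P_\ell$ for the similarity classes of $G[U_1 \cup \cdots \cup U_{i-1}]$ already in place, and allocate the remaining $m\ell$ labels as product labels $(j, k)$ with $j \in \{1, \ldots, m\}$ and $k \in \{1, \ldots, \ell\}$. Starting from a linear $m$-expression $\tau_i$ for $G[U_i]$, I would expand each of its operations so that the similarity class of every vertex of $U_i$ is carried in its label throughout the build: a creation $j(v)$ becomes $(j, k_v)(v)$, where $k_v$ is the similarity class of $v$ in $U_i$; an edge operation $\eta_{j, j'}$ is replaced by a sequence of $\eta_{(j, k), (j', k')}$ over all $k, k'$; and a renaming $\rho_{j \to j'}$ by a sequence of $\rho_{(j, k) \to (j', k)}$ over all $k$. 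These operations are appended to the spine of the existing expression. The caterpillar structure is preserved because each $\oplus$ still joins the current graph with a single new vertex, and the $P$-labels remain untouched because the new $\eta$ and $\rho$ operations mention only product labels.

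Once all of $G[U_i]$ is constructed, I would collapse each family $\{(j, k) : j = 1, \ldots, m\}$ to the single label $(1, k)$, so that the vertices of $U_i$ occupy only $\ell$ labels indexed by their classes. Cross-edges between $U_i$ and the existing graph are then added by $\eta_{P_k, (1, k')}$ for the appropriate pairs; this is well defined because all vertices sharing the label $P_k$ have the same neighbourhood in $U_i$, and all vertices of class $k'$ in $U_i$ have the same neighbourhood in $V(G) \setminus U_i$, so each pair of classes is either completely joined or completely non-joined. Finally, vertices sharing any of the at most $2\ell$ currently used labels lie in a single similarity class of $G[U_1 \cup \cdots \cup U_i]$, and there are at most $\ell$ such classes by hypothesis~(2), so a sequence of $\rho$-operations merges them into $\ell$ labels, restoring the invariant; this stays within budget since $2\ell \leq \ell(m+1)$ for $m \geq 1$.

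The main technical obstacle is verifying correctness of the product-label expansion: one must check that the expanded expression builds precisely $G[U_i]$, with the correct edges and no spurious ones. This follows from the fact that at every step each $U_i$-vertex carries its class index as a coordinate of its label, so the original behaviour of $\tau_i$ (in the first coordinate) is faithfully reproduced while the class index (in the second coordinate) is preserved. The other subtle point is the well-definedness of the cross-edges between $P_k$ and $(1, k')$, which rests on the two similarity relations, for $U_1 \cup \cdots \cup U_{i-1}$ and for $U_i$, both respecting the bipartition between these two sets.
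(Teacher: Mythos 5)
Your proposal is correct and follows essentially the same route as the paper: build the graph part by part, using $m\ell$ product labels to carry each vertex's similarity class through the construction of $G[U_i]$ alongside $\ell$ labels for the classes of the part already built, then merge everything down to $\ell$ class labels before proceeding. The only slip is the base case as literally stated (the final labels of an arbitrary linear $m$-expression for $G[U_1]$ need not refine the similarity classes of $U_1$, so they cannot simply be ``collapsed''); the fix is to apply your product-label expansion there as well, exactly as in the inductive step.
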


\begin{proof}
If $G[U_1]$ can be constructed with at most $m$ labels and $\mu(U_1)\leq \ell$, then $G[U_1]$ can be constructed with at most $m\ell$ different labels 
in such a way that in the process of construction any two vertices in different equivalence classes of $U_1$ have different labels, 
and by the end of the process any two vertices in the same equivalence class of $U_1$ have the same label. 
In other words, we build $G[U_1]$ with at most $m\ell$ labels and finish the process with at most $\ell$ labels corresponding to the equivalence classes of $U_1$.

Now assume we have constructed the graph $G_i=G[U_1\cup\cdots\cup U_i]$ using $m\ell$ different labels making sure that the construction finishes 
with a set $A$ of at most $\ell$ different labels corresponding to the equivalence classes of $U_1\cup\cdots\cup U_i$. 
By assumption, it is possible to construct $G[U_{i+1}]$ using a set $B$ of at most $m\ell$ different labels such that we finish the process 
with at most $\ell$ labels corresponding to the equivalence classes of $U_{i+1}$. We choose labels so that $A$ and $B$ are disjoint. 
As we construct $G[U_{i+1}]$ join each vertex to its neighbours in $G_i$ to build the graph $G_{i+1}=G[U_1\cup\cdots\cup U_i \cup U_{i+1}]$. 
Notice that any two vertices in the same equivalence class of $U_1\cup\cdots\cup U_i$ or $U_{i+1}$ belong to the same equivalence class 
of $U_1\cup\cdots\cup U_i \cup U_{i+1}$. Therefore, the construction of $G_{i+1}$ can be completed with a set of at most $\ell$ different labels 
corresponding to the equivalence classes of the graph. The conclusion now follows by induction.
\end{proof}

Now let $\alpha$ be an infinite binary periodic word of period $p$ with at least one $1$. 
In the following three lemmas, let $H_{k,t}$ be any subgraph of~$\mathcal{P}^\alpha$ induced by the first
$k$~rows and any $t$~consecutive columns.

It is not difficult to see the following fact.
\begin{lemma}\label{lem:univer}
A graph with $n$ vertices in ${\cal G}_{\alpha}$ is an induced subgraph of $H_{k,t}$ for any $k\ge n$ and any $t\ge n(p+1)$.
\end{lemma}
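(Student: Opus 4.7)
The plan is to realise any $n$-vertex graph $G\in\mathcal{G}_\alpha$ inside $H_{k,t}$ by exploiting two symmetries of $\mathcal{P}^\alpha$: permutations of row indices are automorphisms, because the edges between any two consecutive columns are defined in a row-symmetric way; and translations of column indices by a multiple of the period $p$ preserve adjacency, because $\alpha_{j+p}=\alpha_j$.

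Write $G=\mathcal{P}^\alpha[W]$ with $|W|=n$. Using row symmetry I may assume that the rows met by $W$ lie in $\{1,\dots,n\}$, so they already fit into the first $k\ge n$ rows of $H_{k,t}$. For the columns, let $c_1<\dots<c_m$ (with $m\le n$) be the column indices occupied by $W$, and partition them into the maximal runs of consecutive integers $B_1,\dots,B_r$, of sizes $b_1,\dots,b_r$. Because $\mathcal{P}^\alpha$ has edges only between columns whose indices differ by $1$, the graph $\mathcal{P}^\alpha[W]$ has edges only \emph{within} blocks; the edge pattern inside a block is determined by the values of $\alpha$ read along its columns; and distinct blocks contribute no edges, since maximality forces at least one empty column between them. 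Consequently, shifting each block independently by a multiple of $p$ (with row coordinates kept intact) preserves the isomorphism type of $\mathcal{P}^\alpha[W]$, provided that the shifted blocks remain pairwise separated by at least one empty column.

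It remains to show that $t\ge n(p+1)$ columns of $H_{k,t}$ suffice to accommodate such a placement, which I do greedily. Index the columns of $H_{k,t}$ as $1,\dots,t$ and let $s_i$ denote the original starting column of $B_i$. Place $B_1$ at the smallest column of $H_{k,t}$ congruent to $s_1\bmod p$, which lies in $\{1,\dots,p\}$. For $i\ge 2$, let $B_i$ start at the smallest column $s'_i$ satisfying $s'_i\ge s'_{i-1}+b_{i-1}+1$ and $s'_i\equiv s_i\pmod p$; such an $s'_i$ is found within $p$ consecutive candidates, so $s'_i\le s'_{i-1}+b_{i-1}+p$. Telescoping and using $r\le n$ and $\sum b_i=m\le n$, I get that $B_r$ ends at position at most $p+\sum_{i=1}^r b_i+(r-1)p-1\le n(p+1)-1\le t-1$, so the whole placement fits. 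The only mildly subtle points are the observation that non-consecutive columns of $\mathcal{P}^\alpha$ carry no edges (which makes independent block-wise placement legitimate) and the column arithmetic; neither constitutes a real obstacle.
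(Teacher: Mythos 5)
Your proof is correct. The paper itself offers no argument for this lemma (it is dismissed with ``It is not difficult to see the following fact''), so there is nothing to compare against; your write-up supplies the missing justification in full. The two symmetries you invoke are exactly the right ones: for a binary word $\alpha$ the edges between consecutive columns (a perfect matching or its bipartite complement) are invariant under a common permutation of the row indices, and periodicity makes column shifts by multiples of $p$ adjacency-preserving. The block decomposition is sound because all edges of $\mathcal{P}^\alpha$ join consecutive columns, so maximal runs of occupied columns are pairwise non-adjacent and can be relocated independently, and your greedy packing bound $rp+m-1\le n(p+1)-1\le t-1$ checks out. Two minor points worth being explicit about: the congruences $s_i'\equiv s_i\pmod p$ should be read in terms of the absolute column indices of $\mathcal{P}^\alpha$, since $H_{k,t}$ may occupy any $t$ consecutive columns rather than the first $t$; and the row-permutation symmetry is specific to the alphabet $\{0,1\}$ used in Section~\ref{sec:min} (it would fail for the letter $2$ of Section~\ref{sec:more}, whose chain graphs depend on the row order), which is consistent with the lemma's hypotheses.
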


Now, with the help of Lemma~\ref{lem:min-3} we derive the following conclusion. 

\begin{lemma}\label{easy}
The linear clique-width of $H_{k,t}$ is at most $4t$.
\end{lemma}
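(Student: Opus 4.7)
The plan is to apply Lemma~\ref{lem:min-3} to the row-partition $R_1, R_2, \ldots, R_k$ of $V(H_{k,t})$, choosing the parameters $m=3$ and $\ell=t$; this immediately yields the stated bound $\ell(m+1)=4t$.

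Condition~(1) is straightforward: the subgraph $H_{k,t}[R_i]$ is a disjoint union of paths, because two consecutive row vertices $v_{i,j}$ and $v_{i,j+1}$ are adjacent in $\mathcal{P}^\alpha$ exactly when $\alpha_j=0$. A disjoint union of paths has linear clique-width at most~$3$: one builds the paths one after another, reusing three labels for the roles `active end', `newly created vertex', and `archived' (a path-ending renaming sends all finished vertices into the `archived' label before the next path is started). Hence ${\rm lcwd}(H_{k,t}[R_i])\le 3$.

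For condition~(2), the key observation is that the neighbourhood of a vertex $v_{i,j}$ in $V(H_{k,t})\setminus R_i$ depends only on the column index $j$ (and on $\alpha$), not on the row $i$. Indeed, any such neighbour lies in $V_{j-1}\cup V_{j+1}$; restricted to $V_{j-1}\setminus R_i$ the neighbourhood is the whole set $V_{j-1}\setminus R_i$ when $\alpha_{j-1}=1$ and is empty when $\alpha_{j-1}=0$, and analogously for $V_{j+1}$. In neither case does the description mention the row $i$. The same reasoning, applied with the `outside' set replaced by $R_{i+1}\cup\cdots\cup R_k$, shows that for every $v_{i',j}$ with $i'\le i$ the neighbourhood outside $R_1\cup\cdots\cup R_i$ is likewise determined by $j$ and~$\alpha$ alone. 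Consequently each column of $H_{k,t}$ contributes at most one similarity class, so $\mu(R_i)\le t$ and $\mu(R_1\cup\cdots\cup R_i)\le t$.

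Both hypotheses of Lemma~\ref{lem:min-3} therefore hold with $m=3$ and $\ell=t$, and the lemma delivers ${\rm lcwd}(H_{k,t})\le 4t$. The only delicate point is the prefix computation of $\mu$: one has to notice that once the first $i$ rows are removed from consideration, the remaining rows $i+1,\ldots,k$ interact with every vertex of a given column in exactly the same uniform way (determined by $\alpha_{j-1}$ and $\alpha_j$), which is what forces the similarity classes to collapse to at most one per column.
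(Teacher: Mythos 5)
Your proof is correct and follows essentially the same route as the paper's: partition $H_{k,t}$ into its $k$ rows, note that each row induces a disjoint union of paths (so $m=3$), observe that vertices of the same column in a prefix $R_1\cup\cdots\cup R_i$ are similar (so $\ell=t$), and invoke Lemma~\ref{lem:min-3} to get $t(3+1)=4t$. Your justification of the column-collapse for the prefix similarity count is spelled out in more detail than in the paper, but the argument is the same.
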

\begin{proof} 
Denote by $U_i$ the $i$-th row of $H_{k,t}$.  
Since each row induces a path forest (that is, a disjoint union of paths), it is clear that ${\rm lcwd}(G[U_i])\leq 3$ for every $i$. 
Trivially, $\mu (U_i)\leq t$, since $|U_i|=t$. Also, denoting $W_i:=U_1\cup \ldots \cup U_i$, 
it is not difficult to see that $\mu (W_i)\leq t$ for every $i$, since the vertices of the same column are $W_i$-similar. 
Now the conclusion follows from Lemma~\ref{lem:min-3}.
\end{proof}

Next we use Lemmas~\ref{lem:min-3},~\ref{lem:univer}  and~\ref{easy} to prove the following result. 

\begin{lemma}\label{lem:kFk}
For any fixed $k\ge 1$, the linear clique-width of any $H_{k,k}$-free graph~$G$ in the
class~$\mathcal{G}_\alpha$ is at most $(4k-2)(8k+1)$.
\end{lemma}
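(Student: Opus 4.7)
The target bound $(4k-2)(8k+1)$ has precisely the form $\ell(m+1)$ from Lemma~\ref{lem:min-3}, suggesting we apply that lemma with $\ell = 4k-2$ and $m = 8k$. The plan is therefore to exhibit a partition $V(G) = U_1 \cup U_2 \cup \cdots$ satisfying both hypotheses of Lemma~\ref{lem:min-3} for these parameters, the natural candidate being a column-block partition: divide the columns of $\mathcal{P}^\alpha$ into consecutive groups $B_1, B_2, \ldots$ of $2k$ columns each, and set $U_i := V(G) \cap \bigcup_{j \in B_i} V_j$.

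Hypothesis~(1), namely ${\rm lcwd}(G[U_i]) \le 8k$, should follow from the same argument as Lemma~\ref{easy} applied internally: $G[U_i]$ sits inside the restriction of $\mathcal{P}^\alpha$ to $2k$ consecutive columns, so partitioning $U_i$ further by rows gives path forests (each of linear clique-width at most $3$) with $\mu \le 2k$ (bounded by the block width), and Lemma~\ref{lem:min-3} applied to this inner partition yields ${\rm lcwd}(G[U_i]) \le 4 \cdot 2k = 8k$.

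The main obstacle is hypothesis~(2): bounding $\mu(U_i) \le 4k-2$ and $\mu(W_i) \le 4k-2$, where $W_i = U_1 \cup \cdots \cup U_i$. Since edges of $\mathcal{P}^\alpha$ only span consecutive columns, the outside-$U_i$ neighbourhood of a vertex in $U_i$ lies in the two columns immediately flanking $B_i$: interior vertices contribute a single ``empty outside-neighbourhood'' class, while the remaining classes come from vertices in the leftmost or rightmost column of $B_i$, whose similarity class depends, via the matching/co-matching pattern determined by $\alpha$, on a ``paired'' row in the adjacent outside column. To cap such classes at $4k-2$, the plan is to use $H_{k,k}$-freeness: a row that both crosses one of the flanks of $B_i$ and extends $k-1$ additional columns inside $B_i$ exhibits $k$ consecutive vertices at that flank, and by $H_{k,k}$-freeness at most $k-1$ such rows can exist per flank. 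Summing over the two flanks of $B_i$, and accounting for short-extending crossing rows (which must be absorbed into the construction of $G[U_i]$ rather than allowed to spawn new outside-similarity classes), should produce the $4k-2$ bound; for $W_i$ only the right flank contributes, and the argument is simpler.

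Once both hypotheses are verified, Lemma~\ref{lem:min-3} delivers ${\rm lcwd}(G) \le \ell(m+1) = (4k-2)(8k+1)$ at once. I expect the hard part to be the bookkeeping in step~(2), since short boundary-crossings are a priori unbounded in number, so producing the exact $4k-2$ bound may require a subtler choice of partition (perhaps shifting the block boundaries greedily to columns of low activity, or grouping short-crossing rows into common similarity classes using the structure of the matching/co-matching pattern) rather than a naive uniform $2k$-column split.
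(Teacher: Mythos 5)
Your framework is the right one and matches the paper's: embed $G$ into some $H_{n,n}$, build a column-block partition, verify the two hypotheses of Lemma~\ref{lem:min-3} with $m=8k$ (via Lemma~\ref{easy} on $2k$ consecutive columns) and $\ell=4k-2$, and read off the bound $(4k-2)(8k+1)$. But the step you flag as "the main obstacle" is exactly where the proof lives, and your candidate partition cannot be repaired in either of the ways you suggest. With a uniform $2k$-column split, condition~(2) genuinely fails: if the two columns straddling the boundary between $B_{i-1}$ and $B_i$ induce a matching, then every row that is black in both of those columns contributes its \emph{own} $U_i$-similarity class, since its unique outside neighbour is its matched partner and these partners are distinct for distinct rows (the co-matching case fails the same way, with neighbourhoods of the form ``all black vertices of the previous column except the own-row partner''). $H_{k,k}$-freeness only limits rows that are black across $k$ consecutive columns, so for $k\ge 3$ the number of such two-column crossings is unbounded. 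Hence these classes cannot be ``grouped'' -- they really are distinct -- and there need not exist any low-activity column to shift the boundary to, since every consecutive pair of columns can carry many short crossings without creating an $H_{k,k}$.

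The missing idea in the paper is to cut each row \emph{individually} rather than all rows at the same column. Take blocks $W_i$ of $k$ columns, and assign to $U_{i-1}$, from each row of $W_i$, the black prefix up to the first white vertex (only the first vertex if the row of $W_i$ is entirely black); the rest of the row's black vertices go to $U_i$. Then a vertex of $U_i$ has a same-row neighbour in an adjacent part only if it lies in the second column of an entirely black row of $W_i$ or the first column of an entirely black row of $W_{i+1}$; by $H_{k,k}$-freeness there are at most $k-1$ such rows per block, giving at most $2(k-1)$ ``boundary'' classes. All remaining vertices of $U_i$ in a fixed column turn out to be $U_i$-similar (for both the matching and co-matching patterns), contributing at most $2k$ further classes, whence $\mu(U_i)\le 4k-2$, and similarly for the unions $U_1\cup\cdots\cup U_i$. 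Without this adaptive per-row cut your argument does not close.
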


\begin{proof}
Let $G$ be an $H_{k,k}$-free graph in ${\cal G}_{\alpha}$. By Lemma~\ref{lem:univer}, the graph $G$ is an induced subgraph of $H_{n,n}$ for some $n$. 
For convenience, assume that $n$ is a multiple of $k$, say $n=tk$. We fix an arbitrary embedding of $G$ into $H_{n,n}$ and call
the vertices of $H_{n,n}$ that induce $G$ {\it black}. The remaining vertices of $H_{n,n}$ will be called {\it white}.

For $1\leq i \leq t$, let us denote by $W_i$ the subgraph of $H_{n,n}$ induced by 
the $k$ consecutive columns $(i-1)k+1,(i-1)k+2,\ldots,ik$. 
We partition the vertices of $G$ into subsets $U_1, U_2,\ldots, U_t$ according to 
the following procedure: 

\begin{enumerate}
\item For $1\leq j\leq t$, set $U_j = \emptyset$. Add every black vertex of $W_1$ to $U_1$. Set $i=2$.
\item For $j=1,\ldots,n$, 
\begin{itemize}
\item if row $j$ of $W_i$ is entirely black, then add the first vertex of this row  
to $U_{i-1}$ and the remaining vertices of the row to $U_i$.
\item otherwise, add the (black) vertices of row $j$ preceding the first white vertex
to $U_{i-1}$ and add the remaining black vertices of the row to $U_i$.
\end{itemize}
\item Increase $i$ by 1. If $i=t+1$, terminate the procedure.
\item Go back to Step 2.
\end{enumerate}

Let us show that the partition $U_1, U_2,\ldots, U_t$ given by the procedure satisfies 
the assumptions of Lemma~\ref{lem:min-3} with $m$ and $\ell$ depending only on $k$.

The procedure clearly assures that each $G[U_i]$ is an induced subgraph of 
$G[V(W_i) \cup V(W_{i+1})]$. 
By Lemma~\ref{easy}, we have ${\rm lcwd}(G[V(W_i) \cup V(W_{i+1})]) = {\rm lcwd}(F_{n,2k})\leq 8k$. 
Since the linear clique-width of an induced subgraph cannot exceed the linear clique-width of 
the parent graph, we conclude that ${\rm lcwd}(G[U_j])\leq 8k$, which shows condition 
(1) of Lemma~\ref{lem:min-3}.

To show condition (2) of Lemma~\ref{lem:min-3}, let us call a vertex $v_{j,m}$ of $U_i$ 
boundary if either $v_{j,m-1}$ belongs to $U_{i-1}$ or $v_{j,m+1}$ belongs to $U_{i+1}$
(or both). It is not difficult to see that a vertex of $U_i$ is boundary if it belongs
either to the second column of an entirely black row of $W_i$ or to the first column
of an entirely black row of $W_{i+1}$. Since the graph $G$ is $H_{k,k}$-free, 
the number of rows of $W_i$ which are entirely black is at most $k-1$. 
Therefore, the boundary vertices of $U_i$ introduce at most $2(k-1)$ equivalence classes in $U_i$.  

Now consider two non-boundary vertices of $U_i$ from
the same column. It is not difficult to see that these vertices have the same neighbourhood 
outside of $U_i$. Therefore, the non-boundary vertices of the same column of $U_i$ are $U_i$-similar 
and hence the non-boundary vertices give rise to at most $2k$
equivalence classes in $U_i$. Thus, $\mu(U_i)\leq 4k-2$ for all $i$. 

Similar argument show that $\mu (U_1\cup \ldots\cup U_i)\leq 3k-1 \leq 4k-2$ for all $i$. 
Therefore, by Lemma~\ref{lem:min-3}, we conclude that ${\rm lcwd}(G)\leq (4k-2)(8k+1)$, which completes the proof.
\end{proof}

\begin{theorem}\label{thm:min}
Let $\alpha$ be an infinite binary periodic word containing at least one $1$. 
Then the class ${\cal G}_{\alpha}$ is a minimal hereditary class of graphs of unbounded clique-width and linear clique-width. 
\end{theorem}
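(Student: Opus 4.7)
My plan is to assemble the three preceding lemmas into the two halves of the claim: unboundedness of $\mathcal{G}_\alpha$ itself, and boundedness of every proper hereditary subclass.

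First I would note that a periodic word $\alpha$ containing at least one $1$ automatically contains infinitely many $1$s, so Lemma~\ref{ucw} applies directly and yields that the clique-width of $\mathcal{G}_\alpha$ is unbounded. Unboundedness of the linear clique-width is then immediate from the general inequality ${\rm lcwd}(G)\ge {\rm cwd}(G)$ that was recorded in the preliminaries.

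For minimality, I would take an arbitrary proper hereditary subclass $\mathcal{X}\subsetneq \mathcal{G}_\alpha$, fix some graph $G\in \mathcal{G}_\alpha\setminus \mathcal{X}$, and set $n=|V(G)|$ together with $k=n(p+1)$, where $p$ is the period of $\alpha$. Lemma~\ref{lem:univer}, applied with these values, embeds $G$ as an induced subgraph of $H_{k,k}$. Because $\mathcal{X}$ is hereditary and omits $G$, every graph in $\mathcal{X}$ is $G$-free; and since $G$ is an induced subgraph of $H_{k,k}$, every such graph is also $H_{k,k}$-free. Lemma~\ref{lem:kFk} then caps the linear clique-width of every graph in $\mathcal{X}$ by the constant $(4k-2)(8k+1)$, which depends only on $G$. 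Hence $\mathcal{X}$ has bounded linear clique-width, and consequently bounded clique-width as well, establishing minimality simultaneously for both parameters.

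The substantive work has already been packaged inside Lemma~\ref{lem:kFk}; the only remaining step in the present theorem is the combinatorial bookkeeping, namely reading off the threshold $k$ from the vertex count of the excluded graph $G$ and checking that $G$-freeness propagates to $H_{k,k}$-freeness via hereditariness. I do not expect any genuine obstacle here, since the three preceding lemmas dovetail precisely into the statement and the minimality conclusion follows by a one-line application of each.
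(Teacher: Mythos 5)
Your proposal is correct and follows essentially the same route as the paper's own proof: Lemma~\ref{ucw} for unboundedness (noting periodicity plus one $1$ gives infinitely many $1$s), then Lemma~\ref{lem:univer} to embed the excluded graph $G$ into some $H_{k,k}$ and Lemma~\ref{lem:kFk} to bound the linear clique-width of the resulting $H_{k,k}$-free subclass. Your explicit choice $k=n(p+1)$ and the remark that $G$-freeness propagates to $H_{k,k}$-freeness via hereditariness are just slightly more detailed versions of what the paper states.
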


\begin{proof}
By Lemma~\ref{ucw}, the clique-with of graphs in ${\cal G}_{\alpha}$ is unbounded. Therefore, linear clique-width is unbounded too.
To prove the minimality, consider a proper hereditary subclass $X$ of ${\cal G}_{\alpha}$ and let $G\in
{\cal G}_{\alpha} \setminus X$.
By Lemma~\ref{lem:univer}, $G$ is an induced subgraph of $H_{k,k}$ for some finite $k$. Therefore, each graph in $X$ is $H_{k,k}$-free.
Observe that the value of $k$ is the same for all graphs in $X$. It depends only on $G$ and the period of $\alpha$.
Therefore, by Lemma~\ref{lem:kFk}, the linear clique-width (and hence clique-width) of graphs in $X$ is bounded by a constant.
\end{proof}


\section{More classes of graphs of unbounded clique-width}
\label{sec:more}

In this section, we extend the alphabet from $\{0,1\}$ to $\{0,1,2\}$ in order to construct more classes of graphs of unbounded clique-width.
Let $\alpha$ be an infinite word over the alphabet $\{0,1,2\}$. We remind the reader that the letter $1$ stands for the operation of bipartite 
complementation between two consecutive columns $V_j$ and $V_{j+1}$ of the graph ${\cal P}$, that is, if $\alpha_{j}=1$, 
then two vertices $v_{i,j}\in V_j$ and $v_{k,j+1}\in V_{j+1}$ are adjacent in ${\cal P}^{\alpha}$ if and only if they are not adjacent in ${\cal P}$.

The new letter $2$ will represent the operation of ``forward'' complementation, that is, if $\alpha_{j}=2$, 
then two vertices $v_{i,j}\in V_j$ and $v_{k,j+1}\in V_{j+1}$ with $i<k$ are adjacent in ${\cal P}^{\alpha}$ if and only if they are not adjacent in ${\cal P}$. 
In other words, this operation adds edges between $v_{i,j}$ and $v_{k,j+1}$ with $i<k$. The bipartite graph induced by two consecutive columns 
corresponding to the letter 2 is known in the literature as a {\it chain graph}.

Of special interest for the topic of this paper is the word $2^{\infty}=222\ldots$.  
The class ${\cal G}_{2^{\infty}}$ is also known as the class of bipartite permutation graphs
and this is one of the first two minimal classes of graphs of unbounded clique-width discovered in the literature \cite{Lozin}.
We will denote by 
\begin{itemize}
\item[$X_{n,n}$] the subgraph of ${\cal P}^{2^{\infty}}$ induced by $n$ consecutive 
columns and and any $n$ rows. Figure~\ref{fig:H} represents an example of the graph $X_{n,n}$ with $n=6$.
\end{itemize}

\begin{figure}[ht]
\begin{center}
\unitlength=0.2mm
\begin{picture}(50,250)
\put(-100,0){\circle*{5}}
\put(-50,0){\circle*{5}}
\put(0,0){\circle*{5}}
\put(50,0){\circle*{5}}
\put(100,0){\circle*{5}}
\put(150,0){\circle*{5}}

\put(-100,50){\circle*{5}}
\put(-50,50){\circle*{5}}
\put(0,50){\circle*{5}}
\put(50,50){\circle*{5}}
\put(100,50){\circle*{5}}
\put(150,50){\circle*{5}}

\put(-100,100){\circle*{5}}
\put(-50,100){\circle*{5}}
\put(0,100){\circle*{5}}
\put(50,100){\circle*{5}}
\put(100,100){\circle*{5}}
\put(150,100){\circle*{5}}

\put(-100,150){\circle*{5}}
\put(-50,150){\circle*{5}}
\put(0,150){\circle*{5}}
\put(50,150){\circle*{5}}
\put(100,150){\circle*{5}}
\put(150,150){\circle*{5}}

\put(-100,200){\circle*{5}}
\put(-50,200){\circle*{5}}
\put(0,200){\circle*{5}}
\put(50,200){\circle*{5}}
\put(100,200){\circle*{5}}
\put(150,200){\circle*{5}}

\put(-100,250){\circle*{5}}
\put(-50,250){\circle*{5}}
\put(0,250){\circle*{5}}
\put(50,250){\circle*{5}}
\put(100,250){\circle*{5}}
\put(150,250){\circle*{5}}

\put(-50,0){\line(-1,0){50}}
\put(-50,0){\line(-1,1){50}}
\put(-50,0){\line(-1,2){50}}
\put(-50,0){\line(-1,3){50}}
\put(-50,0){\line(-1,4){50}}
\put(-50,0){\line(-1,5){50}}

\put(0,0){\line(-1,0){50}}
\put(0,0){\line(-1,1){50}}
\put(0,0){\line(-1,2){50}}
\put(0,0){\line(-1,3){50}}
\put(0,0){\line(-1,4){50}}
\put(0,0){\line(-1,5){50}}

\put(50,0){\line(-1,0){50}}
\put(50,0){\line(-1,1){50}}
\put(50,0){\line(-1,2){50}}
\put(50,0){\line(-1,3){50}}
\put(50,0){\line(-1,4){50}}
\put(50,0){\line(-1,5){50}}

\put(100,0){\line(-1,0){50}}
\put(100,0){\line(-1,1){50}}
\put(100,0){\line(-1,2){50}}
\put(100,0){\line(-1,3){50}}
\put(100,0){\line(-1,4){50}}
\put(100,0){\line(-1,5){50}}

\put(150,0){\line(-1,0){50}}
\put(150,0){\line(-1,1){50}}
\put(150,0){\line(-1,2){50}}
\put(150,0){\line(-1,3){50}}
\put(150,0){\line(-1,4){50}}
\put(150,0){\line(-1,5){50}}


\put(-50,50){\line(-1,0){50}}
\put(-50,50){\line(-1,1){50}}
\put(-50,50){\line(-1,2){50}}
\put(-50,50){\line(-1,3){50}}
\put(-50,50){\line(-1,4){50}}

\put(0,50){\line(-1,0){50}}
\put(0,50){\line(-1,1){50}}
\put(0,50){\line(-1,2){50}}
\put(0,50){\line(-1,3){50}}
\put(0,50){\line(-1,4){50}}

\put(50,50){\line(-1,0){50}}
\put(50,50){\line(-1,1){50}}
\put(50,50){\line(-1,2){50}}
\put(50,50){\line(-1,3){50}}
\put(50,50){\line(-1,4){50}}

\put(100,50){\line(-1,0){50}}
\put(100,50){\line(-1,1){50}}
\put(100,50){\line(-1,2){50}}
\put(100,50){\line(-1,3){50}}
\put(100,50){\line(-1,4){50}}

\put(150,50){\line(-1,0){50}}
\put(150,50){\line(-1,1){50}}
\put(150,50){\line(-1,2){50}}
\put(150,50){\line(-1,3){50}}
\put(150,50){\line(-1,4){50}}

\put(-50,100){\line(-1,0){50}}
\put(-50,100){\line(-1,1){50}}
\put(-50,100){\line(-1,2){50}}
\put(-50,100){\line(-1,3){50}}

\put(0,100){\line(-1,0){50}}
\put(0,100){\line(-1,1){50}}
\put(0,100){\line(-1,2){50}}
\put(0,100){\line(-1,3){50}}

\put(50,100){\line(-1,0){50}}
\put(50,100){\line(-1,1){50}}
\put(50,100){\line(-1,2){50}}
\put(50,100){\line(-1,3){50}}

\put(100,100){\line(-1,0){50}}
\put(100,100){\line(-1,1){50}}
\put(100,100){\line(-1,2){50}}
\put(100,100){\line(-1,3){50}}

\put(150,100){\line(-1,0){50}}
\put(150,100){\line(-1,1){50}}
\put(150,100){\line(-1,2){50}}
\put(150,100){\line(-1,3){50}}

\put(-50,150){\line(-1,0){50}}
\put(-50,150){\line(-1,1){50}}
\put(-50,150){\line(-1,2){50}}

\put(0,150){\line(-1,0){50}}
\put(0,150){\line(-1,1){50}}
\put(0,150){\line(-1,2){50}}

\put(50,150){\line(-1,0){50}}
\put(50,150){\line(-1,1){50}}
\put(50,150){\line(-1,2){50}}

\put(100,150){\line(-1,0){50}}
\put(100,150){\line(-1,1){50}}
\put(100,150){\line(-1,2){50}}

\put(150,150){\line(-1,0){50}}
\put(150,150){\line(-1,1){50}}
\put(150,150){\line(-1,2){50}}

\put(-50,200){\line(-1,0){50}}
\put(-50,200){\line(-1,1){50}}

\put(0,200){\line(-1,0){50}}
\put(0,200){\line(-1,1){50}}

\put(50,200){\line(-1,0){50}}
\put(50,200){\line(-1,1){50}}

\put(100,200){\line(-1,0){50}}
\put(100,200){\line(-1,1){50}}

\put(150,200){\line(-1,0){50}}
\put(150,200){\line(-1,1){50}}

\put(-50,250){\line(-1,0){50}}

\put(0,250){\line(-1,0){50}}

\put(50,250){\line(-1,0){50}}

\put(100,250){\line(-1,0){50}}

\put(150,250){\line(-1,0){50}}
\end{picture}
\end{center}
\caption{The graph $X_{6,6}$}
\label{fig:H}
\end{figure}
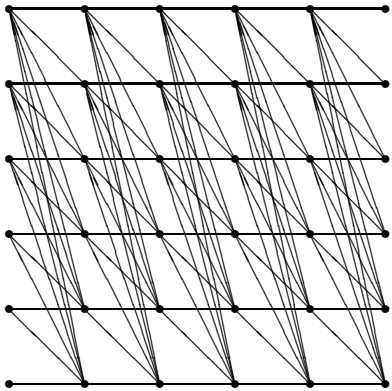

The unboundedness of clique-width in the class  ${\cal G}_{2^{\infty}}$ follows from the following result proved in \cite{cwbp}.

\begin{lemma}\label{lem:bp}
The clique-width of $X_{n,n}$ is at least $n/6$.
\end{lemma}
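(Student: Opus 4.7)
The plan is to imitate the argument used in the proof of Lemma~\ref{cm}, adapting it to the chain-graph structure between consecutive columns of $\mathcal{P}^{2^{\infty}}$. I would take a $t$-expression $\tau$ defining $X_{n,n}$ and locate a lowest $\oplus$-node $a$ in $\text{tree}(\tau)$ whose subgraph $F(a)$ already contains a full column of $X_{n,n}$. Colouring the vertices belonging to the two subtrees of $a$ red and blue respectively, and the rest of $X_{n,n}$ yellow, I obtain that some column $V_r$ is entirely non-yellow while no column is monochromatically red or blue. Reversing the column order is an isomorphism of $X_{n,n}$, so by this symmetry together with a possible swap of red and blue I may assume $r\le \lceil n/2\rceil$ and that $V_r$ contains at least $n/2$ red vertices.

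Since $F(a)$ has no edges between differently-coloured vertices, any two red vertices that are distinguished by a non-red vertex must carry distinct labels at the node $a$. The task therefore reduces to exhibiting at least $n/6$ red vertices that are pairwise distinguishable in this sense. The structural fact I would exploit is that in the chain graph between columns $V_j$ and $V_{j+1}$ of $X_{n,n}$, a vertex $v_{m,j+1}$ is adjacent to $v_{i,j}$ iff $i\le m$. Consequently, a single non-red vertex at row $m$ in $V_{j+1}$ splits any set of red vertices of $V_j$ into rows $\le m$ and rows $>m$, and these two groups are guaranteed to have different labels at $a$.

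Based on this, I would run an iterative procedure analogous to the one in Lemma~\ref{cm}: starting from the red vertices in $V_r$, walk rightward through columns $V_{r+1},V_{r+2},\dots$, and maintain a partition of the currently tracked red vertices into \emph{blocks} of contiguous row-indices whose pairwise distinguishability has not yet been established. At each column, pick any guaranteed non-red vertex (existing because no column is monochromatically red); if its row falls within the range of some block, that block splits into two, adding one new label-distinction. Iterating over the at least $n/2$ columns to the right of $V_r$ should accumulate enough splits to produce at least $n/6$ pairwise label-distinct red vertices, the factor $1/6$ absorbing the losses from the two symmetry reductions and from the weaker "one split per column" yield of the chain structure.

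The main obstacle will be the bookkeeping in this count. Unlike the matching case of Lemma~\ref{cm}, where a single non-red column neighbour immediately distinguished one red vertex from many others, here each non-red vertex produces only a threshold split, which might carve off an empty or already-resolved block if the non-red vertex sits at an extremal row of the current active range. Ensuring that splits across different columns do not pile up on the same block wastefully, and that the counting survives the $r\le\lceil n/2\rceil$ truncation, is the delicate part of the argument and is what ultimately dictates the constant $1/6$.
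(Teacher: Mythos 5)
First, a point of reference: the paper does not prove this lemma at all --- it is imported from \cite{cwbp}. So there is no in-paper proof to match, and your argument has to stand entirely on its own.

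Your setup is sound and correctly mirrors Lemma~\ref{cm}: the lowest $\oplus$-node $a$ whose graph $F(a)$ contains a full column, the red/blue/yellow colouring, the symmetry reductions, and the observation that a non-red vertex distinguishing two red vertices forces distinct labels at $a$. The structural fact about thresholds in a chain graph is also right. But the proof stops exactly where the real difficulty begins, and I see a genuine gap rather than mere bookkeeping. If the red vertices you track all stay in column $V_r$, then the only vertices of $X_{n,n}$ adjacent to anything in $V_r$ lie in $V_{r-1}\cup V_{r+1}$, so only non-red vertices in those two columns can distinguish two red vertices of $V_r$; since you are only guaranteed one non-red vertex per column, you get a bounded number of threshold cuts and hence only $O(1)$ distinct labels --- nowhere near $n/6$. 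To get linearly many distinctions you must, as in the procedure of Lemma~\ref{cm}, let the tracked vertices migrate rightward along their rows into different columns. But then the key step of Lemma~\ref{cm} breaks: there, for $v_{i,l}$ and $v_{k,m}$ with $v_{k,m-1}$ red, \emph{any} non-red vertex $w$ of column $m-1$ is adjacent to $v_{k,m}$ (co-matching structure) and trivially non-adjacent to $v_{i,l}$, so one guaranteed witness suffices. In the chain graph, a non-red vertex of column $m-1$ at row $s$ is adjacent to $v_{k,m}$ only when $s\le k$, so the single guaranteed witness may fail to distinguish the pair. This is not a constant-factor loss that ``$1/6$ absorbs''; it is the step on which the whole lower bound rests, and your proposal explicitly defers it (``the delicate part\dots is what ultimately dictates the constant $1/6$'').

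Concretely, what is missing is an invariant showing that $\Omega(n)$ of the threshold splits are \emph{useful}, i.e.\ that you can select the surviving red vertices so that for every pair there exists a non-red witness on the correct side of the relevant threshold. Without that, the argument as written certifies only a constant lower bound. A correct route would either (a) design the selection procedure so that each retained vertex records the row of the non-red witness that cut it off, and prove that these recorded rows are monotone along the walk so that earlier witnesses still separate later pairs, or (b) simply cite \cite{cwbp} as the paper does.
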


In what follows, we will prove that that every class ${\cal G}_{\alpha}$ with infinitely many $2$s in $\alpha$ has unbounded 
clique-width by showing that graphs in this class contain $X_{n,n}$ as a vertex minor for arbitrarily large values of $n$. 
We start with the case when the letter $1$ appears finitely many times in $\alpha$.

\begin{lemma}\label{ucw2}
Let $\alpha$ be an infinite word over the alphabet $\{0,1,2\}$, containing the letter $2$ infinitely many times and the letter $1$ finitely many times. 
Then the class ${\cal G}_{\alpha}$ has unbounded clique-width.
\end{lemma}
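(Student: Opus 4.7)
Following the strategy of Lemma~\ref{ucw}, my plan is to show that for every~$n$, the class $\mathcal{G}_\alpha$ contains a graph having $X_{n,n}$ as a vertex-minor; combined with Lemma~\ref{lem:bp} and the rank-width/clique-width inequality recalled in Section~\ref{sec:pre}, this will yield unboundedness of clique-width in $\mathcal{G}_\alpha$. Since $\alpha$ has only finitely many~$1$s, there is an~$M$ such that $\alpha(j) \in \{0, 2\}$ for all $j \geq M$. For a given~$n$, I choose a factor~$\beta$ of~$\alpha$ starting at some position $\geq M$, beginning and ending with~$2$, and containing exactly~$n$ occurrences of~$2$. I take $G_n$ to be the subgraph of $\mathcal{P}^\alpha$ induced by the $|\beta|+1$ columns of~$\beta$ together with any $N = 2^{n-1} n$ rows, and I reduce $G_n$ by local complementations and vertex deletions.

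The reduction proceeds in two stages, one for each kind of $0$-pattern that can arise. Whenever the current word still contains a $00$ factor, I use the $00 \to 0$ operation of Lemma~\ref{ucw}: local complementation at every vertex of the middle column followed by deletion of that column. Exactly as in that proof, each middle-column vertex has only one neighbour in each of the two adjacent columns, so the two matchings collapse into a single matching between the outer columns without touching any other column or losing any row. Iterating shrinks every run of~$0$s in~$\beta$ to length at most one; since $\beta$ starts and ends with~$2$, every remaining $0$ is flanked by $2$s. I then treat each such $0$—sitting in a local $202$ pattern on three consecutive columns $V_i, V_{i+1}, V_{i+2}$ with letters $0, 2$—by an $02 \to 2$ operation: perform local complementation at every vertex of $V_{i+1}$, delete $V_{i+1}$, and pass to the induced subgraph on the even-indexed rows. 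A direct calculation, analogous to those in Lemma~\ref{ucw}, shows that the LCs produce a chain between $V_i$ and $V_{i+2}$, preserve the independence of~$V_i$, and introduce in~$V_{i+2}$ exactly the edges $\{v_{l,i+2}, v_{l',i+2}\}$ with $l < l'$ and $l$ odd (since those two vertices both lie in the LC-neighbourhood of precisely the vertices $v_{1,i+1}, \ldots, v_{l,i+1}$ of $V_{i+1}$, giving $l$ toggles). The even-row restriction kills these unwanted edges, while the surrounding columns are entirely unaffected by the LCs (whose neighbourhoods are contained in $V_i \cup V_{i+1} \cup V_{i+2}$) and their chain, matching, or independent-set structure survives the restriction, since chains and matchings restricted to an even-indexed subset of rows are again chains and matchings.

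Each $02 \to 2$ reduction eliminates one $0$ at the cost of halving the row count, so after at most $n-1$ such applications the word describing the remaining bipartite structure is $2^n$, yielding a graph on $n+1$ columns connected by chains and containing at least $N/2^{n-1} = n$ rows; this contains $X_{n,n}$ as an induced subgraph, and therefore as a vertex-minor of~$G_n$. The main technical obstacle is the clean verification of the $02 \to 2$ step: carrying out the parity computation for the edges introduced inside~$V_{i+2}$, confirming that columns outside $V_i \cup V_{i+1} \cup V_{i+2}$ are genuinely left intact by the local complementations, and checking that successive even-row restrictions remain compatible with later $02 \to 2$ reductions; the remainder of the argument is bookkeeping along the lines of the proof of Lemma~\ref{ucw}.
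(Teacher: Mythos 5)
Your proposal is correct and follows essentially the same route as the paper's proof: the same choice of a $1$-free factor $\beta$ with $n$ occurrences of $2$, the same graph $G_n$ on $n2^{n-1}$ rows, the $00\to 0$ reduction borrowed from Lemma~\ref{ucw}, and the $02\to 2$ reduction via local complementation at the middle column followed by deletion of that column and of the odd rows, justified by the same parity count of common neighbours in the chain graph. The only differences are cosmetic (you state the parity condition as ``$l$ odd'' for $l=\min$, the paper as ``$\min\{|N(x)\cap V_{j+1}|,|N(y)\cap V_{j+1}|\}$ is odd''), so there is nothing further to add.
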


\begin{proof}
First fix a constant $n$. Let $\beta$ be a factor of $\alpha$ containing precisely $n$ instances of the letter $2$, 
starting and ending with the letter $2$ and containing no instances of the letter $1$ 
(since letter $2$ appears infinitely many times and letter $1$ finitely many times in $\alpha$, we can always find such a factor). 
We denote the length of $\beta$ by $\ell$ and consider the subgraph $G_n$ of $P^{\alpha}$ induced 
by $\ell+1$ consecutive columns corresponding to $\beta$ and by any $n2^{n-1}$ rows.  We will now show that $G_n$ contains the graph $X_{n,n}$ 
as a vertex-minor.

Using arguments identical to those in Theorem~\ref{ucw}, we can show that any instance of $00$ can be replaced by $0$ with the help of local complementations
and vertex deletions. 

Now each instance of $0$ is surrounded by $2$s in $\beta$. Consider any factor $02$ of $\beta$ 
and let $V_j, V_{j+1}, V_{j+2}$ be three columns such that $V_{j}\cup V_{j+1}$ induces a 1-regular graph and $V_{j+1}\cup V_{j+1}$ induces a chain graph. 
If we apply a local complementation to each vertex of $V_{j+1}$ in turn, it is easy to see that the edges between $V_{j}$ 
and $V_{j+2}$ form a chain graph. Looking at the vertices in the column $V_{j+2}$ we see that for any two vertices $x$ and $y$, 
when a local complementation is applied at $z\in V_{j+1}$ the edge between $x$ and $y$ is complemented if and only if both $x$ and $y$ are adjacent to $z$. 
Therefore, $x$ and $y$ are adjacent  if and only if $\min \{ |N(x)\cap V_{j+1}|, |N(y)\cap V_{j+1}|\}$ is odd. 
Hence the vertices of $V_{j+2}$ in the even rows induce an independent set. 
So, applying a local complementation to each vertex of $V_{j+1}$ in turn and then deleting column $V_{j+1}$ together with the odd rows 
allows us to reduce the factor $02$ to $2$. This transformation also reduces the number of rows two times. Since the factor $02$ can 
appear at most $n-1$ times, in at most $n-1$ transformations we reduced $G_n$ to a graph containing $X_{n,n}$. Therefore, $G_n$ contains $X_{n,n}$ as a vertex minor. 

Since $n$ can be arbitrarily large, we conclude with the help of Lemma~\ref{lem:bp} that graphs in ${\cal G}_{\alpha}$
can have arbitrarily large clique-width.
\end{proof}

\medskip
To extend the last lemma to a more general result, we again refer to \cite{oum2}, which introduces another useful transformation, called {\it pivoting}. 
For a graph $G$ and an edge $xy$, the graph obtained by pivoting $xy$ is defined to be the graph obtained by applying local complementation 
at $x$, then at $y$ and then at $x$ again. Oum shows in \cite{oum2} that in the case of bipartite graphs pivoting $xy$ is identical to complementing 
the edges between $N(x)\setminus \{ y\}$ and $N(y)\setminus \{ x\}$. We will use this transformation to prove the following result.

\begin{lemma}\label{ucw3}
Let $\alpha$ be an infinite word over the alphabet $\{0,1,2\}$, containing the letter $2$ infinitely many times. 
Then the class ${\cal G}_{\alpha}$ has unbounded clique-width.
\end{lemma}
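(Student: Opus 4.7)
The plan is to extend the strategy of Lemma~\ref{ucw2} so that it handles the letter~$1$ as well. Fix a large even integer~$n$ and choose a factor $\beta$ of~$\alpha$ containing exactly $n$ occurrences of~$2$, starting and ending with~$2$. Let $G_n$ be the subgraph of~$\mathcal{P}^{\alpha}$ induced by the columns corresponding to~$\beta$ together with~$R$ consecutive rows, where $R$ is a sufficiently large even integer chosen after inspecting~$\beta$. It suffices to show that $G_n$ contains~$X_{n,n}$ as a vertex-minor, for then Lemma~\ref{lem:bp} completes the argument. As in Lemma~\ref{ucw2}, the $0$s between $2$s will eventually be removed by the reductions $00\to 0$ and $02\to 2$; the real task is to erase the $1$s from the word without destroying the chain-graph structure around the $2$s.

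\emph{Phase 1: break up runs of consecutive $1$s.} Whenever the current word contains a factor~$11$ spanning three consecutive columns $V_{j-1}, V_j, V_{j+1}$, apply local complementation at every vertex of the middle column~$V_j$ in turn and then delete~$V_j$. Because $N(v_{i,j})\subseteq V_{j-1}\cup V_{j+1}$, no edges outside those two columns are disturbed; a direct parity count using the evenness of~$R$ then shows that the induced bipartite graph between $V_{j-1}$ and $V_{j+1}$ becomes a perfect matching, so the factor~$11$ is replaced by a single~$0$ and no rows are lost. Iterating inside each maximal run of $k$ consecutive $1$s produces $\lfloor k/2\rfloor$ new $0$s and, when $k$ is odd, one residual~$1$. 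Since a residual~$1$ has a newly introduced $0$ on one side and, by maximality of the run, a letter of $\{0,2\}$ on the other side, every $1$ in the resulting word is \emph{isolated}; moreover the graph is now isomorphic to $\mathcal{P}^{\beta'}$ on $R$ rows, where $\beta'$ denotes the reduced word.

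\emph{Phase 2: eliminate isolated $1$s by pivoting.} For each isolated~$1$ at position~$j$ of the current word, let $r_{\min}$ and $r_{\max}$ be the smallest and largest surviving row indices; pivot the edge $v_{r_{\min},j}\,v_{r_{\max},j+1}$ and then delete rows $r_{\min}$ and $r_{\max}$. The graph is bipartite, so pivoting complements the edges between $N(v_{r_{\min},j})\setminus\{v_{r_{\max},j+1}\}$ and $N(v_{r_{\max},j+1})\setminus\{v_{r_{\min},j}\}$. The key observation is that, when the labels at $j-1$ and $j+1$ lie in $\{0,2\}$, the extremality of the chosen rows gives $N(v_{r_{\min},j})\cap V_{j-1}=\{v_{r_{\min},j-1}\}$ and $N(v_{r_{\max},j+1})\cap V_{j+2}=\{v_{r_{\max},j+2}\}$. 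Hence all side-effects of the pivot on the column pairs $(V_{j-1},V_j)$, $(V_{j-1},V_{j+2})$ and $(V_{j+1},V_{j+2})$ have an endpoint in a row about to be deleted and vanish, while the bipartite graph between $V_j$ and $V_{j+1}$ on the remaining rows changes from the bipartite complement of a matching to a matching: the letter at position~$j$ becomes $0$, at a cost of two rows per pivot. Once every isolated~$1$ has been processed, the word is over $\{0,2\}$ with $n$ letters~$2$, and the reductions of Lemma~\ref{ucw2} finish the job; choosing $R$ large enough to absorb the row losses ensures that at least $n$ rows survive, so that $G_n$ contains $X_{n,n}$ as a vertex-minor.

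\emph{Main obstacle.} The subtle point is Phase~2 in the chain-graph case: a typical row of a column preceded by letter~$2$ has many neighbours in the previous column, so an arbitrary pivot would create a large unwanted bipartite graph between the non-adjacent columns $V_{j-1}$ and $V_{j+2}$. It is precisely the choice of the topmost and bottommost surviving rows that keeps the relevant neighbourhoods in $V_{j-1}$ and $V_{j+2}$ singletons, so that every spurious edge introduced by the pivot has an endpoint in a row about to be deleted. One must additionally verify that this extreme-row property is preserved throughout the sequence of pivots in Phase~2, which follows because restricting $\mathcal{P}^{\gamma}$ to any subset of rows yields the same structure on the reduced row set.
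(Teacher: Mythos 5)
Your proof is correct, and it reaches the same target ($X_{n,n}$ as a vertex-minor, followed by Lemma~\ref{lem:bp}) with the same toolkit as the paper --- local complementations controlled by parity counts, and pivots on edges joining extreme rows followed by deletion of those rows --- but the word-reduction scheme is organised differently. The paper works block-by-block between consecutive $2$s: if some block contains at least $n$ letters $1$ it escapes to Lemma~\ref{ucw} and exhibits $F_{n,n}$ instead of $X_{n,n}$; otherwise it first removes the $0$s and then uses a single pivot to rewrite $211\to 200$ (killing two adjacent $1$s at once) or $212\to 202$, re-creating $0$s that are then removed by the reductions of Lemmas~\ref{ucw} and~\ref{ucw2}. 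You instead keep the $0$s, first collapse adjacent pairs of $1$s by the parity computation $11\to 0$ (a step the paper never performs, though it is the exact analogue of its $01\to 1$ reduction; it genuinely needs the current number of rows to be even, which your choice of $R$ and the fact that Phase~1 deletes no rows guarantee), and then turn each remaining isolated $1$ into a $0$ by one pivot costing two rows. This eliminates the paper's case distinction and the detour through $F_{n,n}$: your argument always produces $X_{n,n}$, at the price of choosing $R$ after inspecting the fixed finite factor $\beta$, which is legitimate. The decisive verifications --- that for letters in $\{0,2\}$ the extremal choice of rows makes $N(v_{r_{\min},j})\cap V_{j-1}$ and $N(v_{r_{\max},j+1})\cap V_{j+2}$ singletons, so every spurious edge created by the pivot meets a deleted row, and that the column structure of $\mathcal{P}^{\gamma}$ is preserved under restriction to a subset of rows --- are exactly the facts the paper's pivoting steps rely on implicitly, and you state them explicitly. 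Both routes are sound; yours is marginally more uniform, the paper's avoids the extra $11\to 0$ computation.
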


\begin{proof}
First, fix a constant $n$. Let $\beta$ be a factor of $\alpha$ containing precisely $n$ instances of the letter $2$, 
starting and ending with the letter $2$. Let $G_n$ be the subgraph of ${\cal P}^{\alpha}$ induced by the columns 
corresponding to $\beta$ and by any $n2^n+n^2$ rows. To prove the lemma, it is enough to show that $G_n$ contains either $F_{n,n}$ or
$X_{n,n}$ as a vertex minor.

Consider any two consecutive appearances of $2$ in $\beta$ and denote 
the word between them by $\gamma$. In other words, $\gamma$ is a (possibly empty) word in the alphabet $\{0,1\}$.
If $\gamma$ contains at least $n$ instances of $1$, then by Lemma~\ref{ucw} $G_n$ contains $F_{n,n}$ as a vertex minor.
Therefore, we assume that the number of $1$s in $\gamma$ is at most $n-1$. If $\gamma$ contains no instance of $1$, then we apply 
the idea of Lemma~\ref{ucw2} to reduce it to the empty word. If $\gamma$ contains at least one instance of $1$, we apply the idea of 
Lemma~\ref{ucw} to eliminate all $0$s from it. 

Suppose that after this transformation $\gamma$ contains at least two $1$s, that is,  $\beta$ contains $211$ as a factor.
Let $V_j, V_{j+1}, V_{j+2}$ and $V_{j+3}$ be the four columns such that $V_{j+1}\cup V_{j+2}$ and 
$V_{j+2}\cup V_{j+3}$ induce bipartite complements of 1-regular graph and $V_j\cup V_{j+1}$ induces a chain graph.
Let $x$ be the vertex in the first row of column $V_{j+1}$ and $y$ be the vertex in the last row of column $V_{j+2}$.
It is not difficult to see that if we pivot the edge $xy$ and delete the first and the last row, then the graphs induced by $V_{j+1}\cup V_{j+2}$
and by $V_{j+2}\cup V_{j+3}$ become a 1-regular. In other words, we transform the factor $211$ into $200$. 
Then we apply the idea of Lemma~\ref{ucw} to further transform it into $2$. 

Repeated applications of the above transformation allows us to assume that $\gamma$ contains exactly one
$1$, that is,  $\beta$ contains $212$ as a factor.
Let $V_j, V_{j+1}, V_{j+2}$ and $V_{j+3}$ be the four columns such that 
$V_j\cup V_{j+1}$ and $V_{j+2}\cup V_{j+3}$ induce chain graphs and $V_{j+1}\cup V_{j+2}$ induces the bipartite complement of a 1-regular graph. 
Let $x$ be the vertex in the first row of column $V_{j+1}$ and $y$ be the vertex in the last row of column $V_{j+2}$.
It is not difficult to see that if we pivot the edge $xy$ and delete the first and the last row, then the graph induced by $V_{j+1}\cup V_{j+2}$
becomes 1-regular, while the graphs induced by $V_{j}\cup V_{j+1}$ and by $V_{j+2}\cup V_{j+3}$ remain chain graphs. 
In other words, we transform the factor $212$ into $202$. Then we apply the idea of Lemma~\ref{ucw2} to further transform it into $22$.

The above procedure applied at most $n-1$ times allows us to transform $\beta$ into the word of $n$ consecutive $2$s. In terms of graphs, $G_n$ transforms into a sequence 
of $n$ chain graphs. Moreover, it is not difficult to see that if initially $G_n$ contains $n2^n+n^2$ rows, then the resulting graph has 
at least $n$ rows, that is, it contains $X_{n,n}$ as a vertex minor.   
\end{proof}


\section{Conclusion and open problems}
\label{sec:conclusion}

In the preceding sections, we have described a new family of hereditary classes of graphs of unbounded clique-width. 
For many of them, we proved the minimality.  Our results allow us to make the following conclusion.

\begin{theorem}\label{thm:main}
There exist infinitely many minimal hereditary classes of graphs of unbounded clique-width and linear clique-width. 
\end{theorem}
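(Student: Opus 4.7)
The plan is to upgrade Theorem~\ref{thm:min} by exhibiting an explicit infinite family of periodic binary words that produce pairwise distinct minimal classes. For each integer $k\ge 1$ I would set $\alpha_k := (1\,0^k)^\infty$: this word is periodic of period $k+1$ and contains infinitely many $1$s, so Theorem~\ref{thm:min} at once asserts that every $\mathcal{G}_{\alpha_k}$ is a minimal hereditary class of graphs of unbounded clique-width and unbounded linear clique-width. The theorem then reduces to showing that the classes $\mathcal{G}_{\alpha_k}$ are pairwise distinct.

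Here minimality itself does most of the work. If one class were properly contained in another, the smaller would be a proper hereditary subclass of a minimal class of unbounded clique-width, and hence of bounded clique-width, contradicting Theorem~\ref{thm:min}. Thus no two classes in the family can be nested, so it suffices to produce, for each pair $k<k'$, a single graph that lies in $\mathcal{G}_{\alpha_{k'}}$ but not in $\mathcal{G}_{\alpha_k}$.

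For this separator I would take the induced subgraph $H=H_n$ of $\mathcal{P}^{\alpha_{k'}}$ on $n$ rows and the $k'+3$ consecutive columns corresponding to the factor $1\,0^{k'}\,1$ of $\alpha_{k'}$ (this factor exists because consecutive $1$s of $\alpha_{k'}$ are separated by exactly $k'$ zeros). By construction $H$ consists of two copies of $K_{n,n}$ minus a perfect matching, joined by $n$ vertex-disjoint induced paths on $k'+1$ vertices, one per row, and $H \in \mathcal{G}_{\alpha_{k'}}$. The decisive claim is that $H\notin\mathcal{G}_{\alpha_k}$ for $n$ sufficiently large: the maximal run of $0$s in $\alpha_k$ has length only $k<k'$, so $\alpha_k$ does not admit the factor $1\,0^{k'}\,1$, and hence $H$ cannot be embedded into $\mathcal{P}^{\alpha_k}$ as a rectangular subgraph.

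The main obstacle is ruling out every non-rectangular embedding of $H$ into $\mathcal{P}^{\alpha_k}$, and I expect this to follow from a rigidity lemma of pigeonhole type. The crux is that, for large $n$, an induced copy of $K_{n,n}$ minus a perfect matching in $\mathcal{P}^{\alpha_k}$ can only sit between two consecutive columns at a $1$-position of $\alpha_k$: any spread across three consecutive columns would require two adjacent $1$-positions of $\alpha_k$, whereas for $k\ge 1$ the $1$s of $\alpha_k$ are separated by at least $k$ zeros. Consequently the two ``bipartite-complement-of-matching'' blocks of any copy of $H$ in $\mathcal{P}^{\alpha_k}$ each occupy a pair of consecutive columns at $1$-positions, and the $n$ vertex-disjoint linking paths then pin the $n$ rows used and force the whole embedding to sit in a rectangle encoding some factor $1\,0^{k'}\,1$ of $\alpha_k$, contradicting the preceding paragraph. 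Once this rigidity step is established, Theorem~\ref{thm:main} follows.
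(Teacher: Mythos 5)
Your overall strategy coincides with the paper's: apply Theorem~\ref{thm:min} to the periodic words $(1\,0^k)^\infty$ (the paper uses $(0^n1)^\infty$, the same words up to a shift) to get minimality and unboundedness for free, and then show the resulting classes are pairwise distinct by exhibiting, for each pair, a graph in one class but not the other. The preliminary observation that minimality forbids proper nesting is correct but not needed: a single separating graph already shows the two classes are unequal, which is all the theorem requires.

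The only step that is not already contained in Theorem~\ref{thm:min} is the separation, and that is exactly where your argument stops short. You choose as separator the graph $H$ consisting of two copies of $K_{n,n}$ minus a perfect matching joined by $n$ disjoint paths, and the claim $H\notin\mathcal{G}_{\alpha_k}$ rests on a ``rigidity lemma'' that you explicitly leave as something you \emph{expect} to hold. This is not a formality: one must exclude \emph{every} induced embedding of $H$ into $\mathcal{P}^{\alpha_k}$, including embeddings that permute rows, reverse the column order, place the two sides of a $K_{n,n}$-minus-matching block in non-consecutive columns sharing a common neighbour column, or route the $n$ linking paths through several $1$-positions of $\alpha_k$ (an induced path \emph{can} cross a $1$-boundary by switching rows there). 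Your degree-count sketch is a sensible first step but does not by itself pin each block to a single column pair at a $1$-position, nor force the paths to run along rows; as written, the proof of the one nontrivial claim is absent. The paper sidesteps all of this by choosing a far more constrained separator: $\mathcal{G}_{(0^n1)^\infty}$ contains an induced chordless cycle $C_{2(n+2)}$ (two rows running between two consecutive $1$-positions, closed off at each end by one vertex of a further row across the $1$-boundary), whereas for $m>n$ the possible induced cycle lengths in $\mathcal{P}^{(0^m1)^\infty}$ are governed by the gap length $m$ and exclude $2(n+2)$. Because an induced cycle forces degree~$2$ everywhere, checking this is genuinely routine. If you swap your gadget $H$ for these cycles your argument closes; with $H$ you still owe the rigidity proof.
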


\begin{proof}
Let $n$ be a natural number and $\alpha^{(n)}=(0^n1)^{\infty}$. Since $\alpha^{(n)}$ is an infinite periodic word,
by Theorem~\ref{thm:min} ${\cal G}_{\alpha^{(n)}}$ is a minimal class of unbounded clique-width and linear clique-width.

If $n<m$, then ${\cal G}_{\alpha^{(n)}}$ and ${\cal G}_{\alpha^{(m)}}$ do not coincide, since 
${\cal G}_{\alpha^{(n)}}$ contains an induced $C_{2(n+2)}$, while ${\cal G}_{\alpha^{(m)}}$ does not (which it is not difficult to see).
Therefore,  ${\cal G}_{\alpha^{(1)}}$, ${\cal G}_{\alpha^{(2)}},\ldots$ is an infinite sequence of  
minimal hereditary classes of graphs of unbounded clique-width and linear clique-width.
\end{proof}

\medskip
A full description of minimal classes of the form ${\cal G}_{\alpha}$ remains an open question. 
To propose a conjecture addressing this question, we first define the notion of 
almost periodic word.
An infinite word $\alpha$ is {\it almost periodic} if for each factor $\beta$ of $\alpha$ there exists a constant $\ell(\beta)$ 
such that every factor of $\alpha$ of length at least $\ell(\beta)$ contains $\beta$ as a factor.

\begin{conjecture}\label{conj:min_classes}
  Let $\alpha$ be an infinite word over the alphabet $\{0,1,2\}$.
  Then the class $\mathcal{G}_\alpha$ is a minimal hereditary class of unbounded clique-width
  if and only if $\alpha$~is almost periodic and contains at least one $1$ or~$2$.
\end{conjecture}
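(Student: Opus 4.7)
The two implications are handled separately: the ``if'' direction extends the machinery of Section~\ref{sec:min} from periodic to almost periodic words, while the ``only if'' direction rests on a subshift argument that produces a smaller almost periodic witness inside $\mathcal{G}_\alpha$.

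\textbf{Sufficiency.} Assume $\alpha$ is almost periodic and contains at least one $1$ or~$2$. Applying almost periodicity to the singleton factor $1$ (respectively $2$) forces $\alpha$ to contain infinitely many $1$s (respectively $2$s), so by Lemma~\ref{ucw} or Lemma~\ref{ucw3} the class $\mathcal{G}_\alpha$ has unbounded clique-width. For minimality I would replay the proof of Theorem~\ref{thm:min} with the period~$p$ replaced by the recurrence function $\ell_\alpha$ of $\alpha$, where $\ell_\alpha(n)$ denotes the smallest $L$ such that every factor of $\alpha$ of length $L$ contains every factor of $\alpha$ of length~$n$. Lemma~\ref{easy} carries over unchanged, as its proof uses only that rows of $\mathcal{P}^\alpha$ induce path forests and that same-column vertices are $W_i$-similar, both of which are independent of periodicity. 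Lemma~\ref{lem:univer} generalizes to: every $n$-vertex graph in $\mathcal{G}_\alpha$ is an induced subgraph of $H_{k,\ell_\alpha(n)+n}$ for any $k\geq n$. For the extension of Lemma~\ref{lem:kFk}, partition the host $H_{n,n}$ into blocks $W_i$ of width $k'=\ell_\alpha(k)+k$; almost periodicity then guarantees that every such block contains, as an induced subgraph, the specific $H_{k,k}$ whose avoidance is assumed, which is exactly what is needed to cap the number of entirely black rows per block by $k-1$. The corresponding bound on $\mu(U_i)$ follows as before, giving a bound on linear clique-width that depends only on~$k$.

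\textbf{Necessity and main obstacle.} Conversely, suppose $\mathcal{G}_\alpha$ is minimal of unbounded clique-width. If $\alpha$ contained no $1$ and no $2$, then $\mathcal{P}^\alpha=\mathcal{P}$ would be a disjoint union of paths of clique-width at most $3$, contradicting unboundedness; hence $\alpha$ contains some $1$ or~$2$. To prove almost periodicity I would argue contrapositively: if some factor $\beta$ of $\alpha$ admits arbitrarily long $\beta$-free factors, applying Zorn's lemma to the orbit closure $X(\alpha)\subseteq\{0,1,2\}^{\mathbb{N}}$ in the shift space yields a minimal subshift $Y\subseteq X(\alpha)$, and every $\alpha'\in Y$ is automatically almost periodic with all factors appearing in~$\alpha$. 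If such an $\alpha'$ can be arranged to both miss $\beta$ and contain at least one $1$ or~$2$, then $\mathcal{G}_{\alpha'}\subsetneq\mathcal{G}_\alpha$ and the sufficiency direction makes $\mathcal{G}_{\alpha'}$ itself minimal of unbounded clique-width, contradicting minimality of~$\mathcal{G}_\alpha$. The main obstacle is securing the second requirement: when the non-zero letters of $\alpha$ appear with unboundedly growing gaps (for instance $\alpha=010^210^310^4\cdots$), the only minimal subshift of $X(\alpha)$ is $\{0^\infty\}$, so the symbolic-dynamical route supplies no useful~$\alpha'$. In that regime one must construct a proper hereditary subclass of $\mathcal{G}_\alpha$ directly, not of the form $\mathcal{G}_{\alpha'}$---for example by forbidding a carefully chosen induced subgraph so that proper containment is preserved while an infinite family of unbounded-clique-width witnesses survives---and it is precisely here that I expect the proof to stand or fall.
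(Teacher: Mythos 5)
The statement you are proving is labelled \emph{Conjecture}~\ref{conj:min_classes} in the paper: the authors pose it as an open problem and supply no proof of either direction (they only remark, after the statement, that if $\alpha$ contains finitely many $1$s and $2$s then $\mathcal{G}_\alpha$ has bounded clique-width). So there is no proof in the paper to compare yours against, and your proposal has to be judged as an attempted resolution of an open question. As such it is not a proof: it is a strategy with one direction sketched and the other direction containing a gap that you yourself identify. Your sufficiency sketch is the natural attack --- replace the period $p$ by the recurrence function $\ell_\alpha$ and rerun Lemmas~\ref{lem:univer}--\ref{lem:kFk} --- and it is broadly credible, though the stated generalization of Lemma~\ref{lem:univer} is quantitatively off: an $n$-vertex induced subgraph of $\mathcal{P}^\alpha$ may occupy up to $n$ column-clusters whose corresponding factors all have to be relocated, pairwise separated, inside one window, which needs width on the order of $n(\ell_\alpha(n)+2)$ rather than $\ell_\alpha(n)+n$; and since $H_{k,k}$ is not a single graph but depends on which consecutive columns are chosen, ``$H_{k,k}$-free'' must be handled uniformly over all windows. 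These are repairable, but they are not written down.

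The necessity direction is where the proposal genuinely fails, on two counts. First, the obstacle you name is real and unresolved: when the non-zero letters of $\alpha$ occur with unbounded gaps, the orbit closure's only minimal subshift is $\{0^\infty\}$, and you offer no construction of a proper hereditary subclass of unbounded clique-width in that regime --- this is exactly the content of the open problem, so the ``proof'' stops precisely where the difficulty begins. Second, even in the favourable case where a nontrivial minimal subshift exists, you assert $\mathcal{G}_{\alpha'}\subsetneq\mathcal{G}_\alpha$ merely because $\alpha'$ omits a factor $\beta$ of $\alpha$; but distinct words can define comparable or even identical graph classes (the paper itself exhibits $00\to 0$ and $01\to 1$ reductions, and in Theorem~\ref{thm:main} it needs an ad hoc argument via induced cycles $C_{2(n+2)}$ to separate $\mathcal{G}_{\alpha^{(n)}}$ from $\mathcal{G}_{\alpha^{(m)}}$). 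You would need to show that the finite graph determined by the columns of $\beta$ does not embed into $\mathcal{P}^{\alpha'}$ via some other factor, and no such argument is given. In short: the conjecture remains open, and your proposal, while a sensible research plan whose sufficiency half could likely be completed, does not establish the statement.
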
 

Note that almost periodicity implies that either $1$ or $2$
appears in $\alpha$ infinitely many times. It is not hard to verify that this condition is
necessary for the class ${\cal G}_{\alpha}$ to have unbounded clique-width. In other words,
if $\alpha$ contains finitely many $1$s and $2$s the class ${\cal G}_{\alpha}$ has bounded
clique-width.

We conclude the paper by discussing an intriguing relationship between clique-width in a hereditary class $X$ and the existence of infinite antichains 
in $X$ with respect to the induced subgraph relation. 
In particular, the following question was asked in \cite{DRT10}: 
is it true that if the clique-width in $X$ is unbounded, then it necessarily contains an infinite antichain?
Recently, this question was answered negatively in \cite{LRZ15}. However, in the case of so-called {\it coloured}
induced subgraphs, the question remains open.  

\begin{itemize}
\item[] {\it Coloured induced subgraphs}. 
We define this notion for two colours, which is the simplest case where the above question is open.
Assume we deal with graphs whose vertices are coloured by two colours, say white and black. 
We say that a graph $H$ is a {\it coloured induced subgraph} of $G$ if there is an embedding 
of $H$ into $G$ that respects the colours. With this strengthening of the induced subgraph relation, 
some graphs that are comparable without colours may become incomparable if equipped with colours. 
Consider, for instance, two chordless paths $P_k$ and $P_n$. Without colours, one of them is an induced subgraph of the other. 
Now imagine that we colour the endpoints of both paths black and the remaining vertices white. Then clearly they become incomparable with
respect to the coloured induced subgraph relation (if $k\ne n$). Therefore, the set of all paths coloured in this way create an infinite coloured antichain.
Let us denote it by $A_0$. 
\end{itemize}

In \cite{DRT10}, it was conjectured that hereditary classes of graphs of unbounded clique-width necessarily contain infinite coloured antichains.
We believe this is true. Moreover, we propose the following strengthening of the conjecture from \cite{DRT10}.

\begin{conjecture}\label{con:1}
Every minimal hereditary class of graphs of unbounded clique-width contains a canonical infinite coloured antichain.
\end{conjecture}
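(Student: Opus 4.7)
The plan is to split the argument into two stages: first handle the explicit family of minimal classes $\mathcal{G}_{\alpha^{(n)}}$ produced in the proof of Theorem~\ref{thm:main}, then attempt to reduce the general case to the structural description of minimal classes predicted by Conjecture~\ref{conj:min_classes}.

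For the concrete family, a natural candidate for the canonical antichain is the one already spelled out in the preceding discussion, namely $A_0$: long induced paths with the two endpoints coloured black and the internal vertices coloured white. To verify that $A_0 \subseteq \mathcal{G}_{\alpha^{(n)}}$ for every $n$, I would exhibit arbitrarily long induced paths in the universal element $\mathcal{P}^{\alpha^{(n)}}$ by a \emph{staircase} construction along the row/column grid, choosing the zigzag so that the $1$-transitions of the periodic word $\alpha^{(n)}$ do not create shortcuts between non-consecutive path vertices. Once this is done, the antichain property of $A_0$ is automatic: the two black endpoints rigidify any colour-preserving embedding of $P_k$ so that it must be length-preserving, so paths of distinct length are pairwise incomparable as coloured induced subgraphs. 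The word ``canonical'' may then be taken, informally, to mean that the antichain is read off directly from the universal element and is contained in every equivalent presentation of the class.

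For the general case, assuming Conjecture~\ref{conj:min_classes}, every minimal hereditary class of unbounded clique-width is $\mathcal{G}_\alpha$ for some almost periodic $\alpha$ over $\{0,1,2\}$ containing a $1$ or a $2$, and the same staircase argument should still produce $A_0$, using almost periodicity to ensure the grid pattern recurs often enough to admit an infinite family of induced paths. The main obstacle is the unconditional step: without a structural description of minimal hereditary classes of unbounded clique-width, there is no universal element on which to carry out the staircase construction, and one has to derive the existence of arbitrarily long induced paths (or another canonical substructure from which a coloured antichain can be built) from the abstract assumption of unbounded clique-width combined with minimality. I expect the deepest difficulty to lie precisely in such a Ramsey- or compactness-style argument, extracting a uniform canonical substructure from an arbitrary sequence of graphs of growing clique-width in a minimal class; this is essentially why the statement has remained a conjecture rather than a theorem.
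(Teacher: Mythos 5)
This statement is a \emph{conjecture}; the paper does not prove it, and neither do you. The paper only offers supporting evidence in the surrounding discussion: it observes that every minimal class $\mathcal{G}_\alpha$ from Theorem~\ref{thm:min} contains arbitrarily long chordless paths, hence contains the coloured antichain $A_0$, and then argues canonicity. Your first stage (exhibiting $A_0$ inside $\mathcal{G}_{\alpha^{(n)}}$ via a staircase of induced paths, and noting that the black endpoints force length-preserving embeddings) matches that evidence and is, if anything, more explicit than the paper about why long induced paths exist in $\mathcal{P}^{\alpha}$ --- a point worth being careful about, since a single row of $\mathcal{P}^{\alpha}$ does not induce a path once bipartite complementations are applied. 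Your honest assessment of why the unconditional statement is out of reach is also consistent with the paper's treatment.

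The genuine gap is in your handling of \emph{canonical}. You propose to ``take it, informally, to mean that the antichain is read off directly from the universal element,'' but the paper fixes a precise definition (due to Ding): an infinite antichain $A$ in a hereditary class $X$ is canonical if every hereditary subclass of $X$ containing only finitely many members of $A$ has no infinite antichains. Verifying \emph{this} is a substantive step that your proposal never addresses, and it is exactly where the paper's evidence does real work: forbidding all paths of length greater than $k$ confines the remaining graphs of $\mathcal{G}_\alpha$ to at most $k$ consecutive columns of $\mathcal{P}^{\alpha}$, whose clique-width is at most $4k$ by Lemma~\ref{easy}, and from this the absence of infinite antichains in such subclasses is inferred. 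Without some argument of this kind, showing merely that $A_0$ is an infinite coloured antichain contained in the class establishes nothing about its canonicity, so even your ``concrete family'' stage falls short of the partial verification the paper actually carries out.
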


The notion of a canonical antichain was introduced by Guoli Ding in \cite{canonical} and can be defined for
hereditary classes as follows. An infinite antichain $A$ in a hereditary class $X$ is \emph{canonical} if any hereditary
subclass of $X$ containing only finitely many graphs from $A$ has no infinite antichains. In other words,
speaking informally, an antichain is canonical if it is unique in the class.

\medskip
To support Conjecture~\ref{con:1}, let us observe that it is valid for all minimal classes ${\cal G}_{\alpha}$ described in Theorem~\ref{thm:min}.
Indeed, all of them contain arbitrarily large chordless paths and hence all of them contained the infinite coloured antichain $A_0$ defined above.
Moreover, this antichain is canonical, because by forbidding all paths of length greater than $k$ for some fixed $k$, we are left 
with subgraphs of $P^{\alpha}$ occupying at most $k$ consecutive columns, in which case the clique-width of such graphs is at most $4k$
by Lemma~\ref{easy}.

There exist many other infinite coloured antichains, but all available examples are obtained from the antichain $A_0$ by various transformations.
We believe that any infinite coloured antichain can be transformed from $A_0$ in a certain way and that any minimal hereditary class of unbounded 
clique-width can be transformed from ${\cal P}^{\alpha}$ (for some $\alpha$) in a similar way. Describing the set of these transformations is a challenging research task.

\end{document}